\newcommand{\loc}{\textnormal{loc}}
\newcommand{\medint}{-\kern  -,375cm\int}
\definecolor{ora}{rgb}{0.8,0.2,0.1}
\definecolor{vio}{rgb}{0.5,0,0.5}
\definecolor{gre}{rgb}{0.1,0.6,0}
\definecolor{verde}{rgb}{0,0.7,0.4}
\newenvironment{michelarev}{\color{azzurro}}{\color{black}}
\newcommand{\bmicr}{\begin{michelarev}}
\newcommand{\emicr}{\end{michelarev}}
\theoremstyle{plain}
\newtheorem{theorem}{Theorem}[section]
\newtheorem{lemma}[theorem]{Lemma}
\newtheorem{proposition}[theorem]{Proposition}
\theoremstyle{definition}
\theoremstyle{remark}
\theoremstyle{plain}
\def\R{\mathbb{R}}
\numberwithin{equation}{section} \makeatletter
\renewcommand{\p@enumi}{\thesection.}
\makeatother \pagestyle{myheadings} \allowdisplaybreaks
\email{mascolo@unifi.it}
  \email{antonia.passarellidinapoli@unina.it}
\keywords{Higher differentiability, subquadratic problems, non standard growth.}
\subjclass[2000]{35J87, 49J40; 47J20}
\begin{document}
\title[Higher differentiability for   problems under $p,q$ subquadratic  growth ]{Higher differentiability for a class of  problems\\ under $p,q$ subquadratic  growth}
\author[E. Mascolo -- A. Passarelli di Napoli]{E. Mascolo -- Antonia Passarelli di Napoli}

\address{Dipartimento di Matematica "U. Dini", Universit\`a degli Studi di Firenze,
viale  Morgagni , 41125 Firenze, Italy}
\address{Dipartimento di Matematica e Applicazioni ``R. Caccioppoli''
\\
Universit\`a degli Studi di Napoli
\\
``Federico II''
Via Cintia, 80126, Napoli, Italy}
\thanks{\textit{Acknowledgements.}
The work of the authors is supported by GNAMPA (Gruppo Nazionale per l'Analisi Matematica, la Probabilit\`a e le loro Applicazioni) of INdAM (Istituto Nazionale di Alta Matematica). A. Passarelli di Napoli has been supported by Università di Napoli $\lq\lq$Federico II" through the project
FRA--000022-ALTRI-CDA-752021-FRA-PASSARELLI.} 

\begin{abstract}
We study the higher differentiability for  nonlinear elliptic equation in divergence form $\mathcal{A}(x,Du)=b(x)$.
The result covers the cases in which $\mathcal{A}(x, \xi)$ satisfies $p,q$ growth, with $1<p<2$ in $\xi$ and a Sobolev dependence of  with respect to $x$. By means of an  a-priori estimate we ensure the $W^{2,p}_{\mathrm{loc}}(\Omega)$-property for the solution of the boundary value problem.

\end{abstract}

\maketitle

\begin{center}
\fbox{\today}
\end{center}

\section{Introduction}

The paper deals with the following boundary value problem for elliptic equations
\begin{equation}\label{prob}
	\begin{cases}
		\mathrm{div}\,(\mathcal{A}(x,Du))=b(x)\qquad\qquad \text{in}\,\,\Omega\cr\cr
		u=u_0\qquad\qquad\qquad\qquad\qquad \text{on}\,\,\,\partial\Omega
	\end{cases}
\end{equation} 
where $\Omega\subset\mathbb{R}^n$ is a bounded open set and the operator $\mathcal{A}:\Omega\times\mathbb{R}^n\to \mathbb{R}$ is a Carath\'edory map satisfying the following set of assumptions for a couple of exponents $p,q$ such that $1<p<q$ and $1<p\le 2$
\begin{equation}\label{(A2)}
	\langle \mathcal{A}(x,\xi)-\mathcal{A}(x,\eta),\xi-\eta\rangle\ge \nu|\xi-\eta|^2(\mu^2+|\xi|^2+|\eta|^2)^{\frac{p-2}{2}}
\end{equation}
\begin{equation}\label{(A1)}
	| \mathcal{A}(x,\xi)-\mathcal{A}(x,\eta)|\le L|\xi-\eta|\left[(1+|\xi|^2+|\eta|^2)^{\frac{q-2}{2}}+{(\mu^2+|\xi|^2+|\eta|^2)^{\frac{p-2}{2}}}\right]
\end{equation}
\begin{equation}\label{(A3)}
	| \mathcal{A}(x,\xi)-\mathcal{A}(y,\xi)|\le |x-y|(k(x)+k(y))(1+|\xi|^2)^{\frac{q-1}{2}}
\end{equation}
where $L,\nu$ are fixed constants, $\mu\in [0,1]$ is a parameter and $k\in L^r_{\mathrm{loc}}(\Omega)$ with $r>n$ is a non negative function.
\\
Let us observe that if we assume that
$$\mathcal{A}(x,0)=0,$$
assumption \eqref{(A1)} implies
\begin{equation}\label{(A1')}
	| \mathcal{A}(x,\xi)|\le L(1+|\xi|^2)^{\frac{q-1}{2}}
\end{equation}
Then, setting $f(x)=\mathcal {A}(x,0) \in L^{\infty}$, we have
\begin{equation*}\label{(A1'')}
	| \mathcal{A}(x,\xi)|\le L(1+|\xi|^2)^{\frac{q-1}{2}}+f(x)
\end{equation*}

To simplify the presentation, from now on, we shall assume that \eqref{(A1')} is in force.
\\
Assumption \eqref{(A3)}, by virtue of the characterization of the Sobolev function due to Hajlasz, \cite{H}, implies that the partial map $x\mapsto \mathcal{A}(x,\xi)$ belongs to the Sobolev space $W^{1,r}_{\mathrm{loc}}(\Omega)$ and that there exists $\tilde{k}\in L^{r}_{\mathrm{loc}}(\Omega)$ such that
\begin{equation}\label{assob}
	|D_x\mathcal{A}(x,\xi)|\le \tilde k(x)(1+|\xi|^2)^{\frac{q-1}{2}}.
\end{equation}

We prove the following  existence and higher differentiability result.
\begin{theorem}\label{main1}
 Assume that $\mathcal{A}(x,\xi)$ satisfy \eqref{(A2)}---\eqref{(A1')} with $1<p<q$ and {$1<p\le 2$}
 such that
\begin{equation}\label{gapa}
	\frac{q}{p}<1+\min\left\{\frac{1}{n}-\frac{1}{r},\, \frac{2(p-1)}{p(n-2)}\right\} \qquad \text{if}\,\, n>2
\end{equation}
or if  $n=2$
\begin{equation}\label{gapa2}
	\frac{q}{p}<1+\frac{1}{2}-\frac{1}{r} \qquad\qquad\qquad 
\end{equation}
If $$u_0\in W^{1,\frac{p(q-1)}{p-1}}(\Omega)\,\,\, \text{and}\,\,\, b\in L^{\frac{p}{p-1}}_{\mathrm{loc}}(\Omega),$$
then problem \eqref{prob} admits a solution $v\in \big(u_0+W^{1,p}_0(\Omega)\big) \cap W^{2,p}_{\mathrm{loc}}(\Omega)$. Moreover    setting $V_p(Du)=(\mu^2+|Du|^2)^{\frac{p-2}{4}}Du$, the following estimates
\begin{eqnarray}\label{dersec02}
	\int_{B_{\frac{R}{2}}}|D(V_p(D v))|^2\,dx\le c(1+||k||_{L^r(B_R)}+ ||b||_{L^{p'}(B_R)})^{\gamma}\left(1+\int_{B_R}  (1+|Du|)^{p}\,dx\right)^{\gamma}\end{eqnarray}
	
\begin{eqnarray}\label{dersec0}
\int_{B_{\frac{R}{2}}}|D^2 v|^p\,dx\le c(1+||k||_{L^r(B_R)}+ ||b||_{L^{p'}(B_R)})^{\gamma}\left(1+\int_{B_R}  (1+|Du|)^{p}\,dx\right)^{\gamma}\end{eqnarray}
hold for every ball $B_R\Subset\Omega$, for an exponent $\gamma=\gamma(p,q,n)$ and  $c=c(\nu,L,n,p,q,R)$.
\end{theorem}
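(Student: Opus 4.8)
The plan is to prove the a priori estimates first, assuming the solution is already regular enough (say $v\in W^{1,q}_{\mathrm{loc}}\cap W^{2,p}_{\mathrm{loc}}$), and then recover the existence of such a solution by an approximation scheme. For the a priori estimates, I would start from the weak formulation $\int_\Omega \langle \mathcal{A}(x,Dv),D\varphi\rangle\,dx=-\int_\Omega b\,\varphi\,dx$ and differentiate it in a discrete way: choose the test function $\varphi=\tau_{s,-h}\big(\eta^2\,\tau_{s,h}v\big)$, where $\tau_{s,h}$ is the difference quotient in the direction $e_s$ and $\eta\in C_c^\infty(B_R)$ is a cut-off with $\eta\equiv1$ on $B_{R/2}$. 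After summing over $s$ and discrete integration by parts, the monotonicity assumption \eqref{(A2)} produces on the left-hand side a term controlling $\int \eta^2 |\tau_{s,h}Dv|^2(\mu^2+|Dv(x)|^2+|Dv(x+he_s)|^2)^{\frac{p-2}{2}}$, which in the limit $h\to0$ is comparable to $\int_{B_{R/2}}|D(V_p(Dv))|^2$. On the right-hand side there appear three groups of terms: (i) the $x$-dependence term, estimated via \eqref{(A3)} (equivalently \eqref{assob}) by $\int \eta^2 (k(x)+k(x+he_s))|\tau_{s,h}Dv|(1+|Dv|^2)^{\frac{q-1}{2}}$ plus a term with $\eta D\eta$; (ii) the growth term coming from the $|D\eta|$-part, bounded by $\int |D\eta|^2|\tau_{s,h}v|\,|\tau_{s,h}Dv|\big[(1+\cdots)^{\frac{q-2}{2}}+(\mu^2+\cdots)^{\frac{p-2}{2}}\big]$ via \eqref{(A1)}; (iii) the datum term $\int \tau_{s,-h}(\eta^2\tau_{s,h}v)\,b\,dx$, handled by Sobolev embedding and $b\in L^{p'}$.

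The heart of the matter is absorbing and iterating. The first obstacle is that, because $1<p\le2$, the weight $(\mu^2+|Dv|^2+\cdots)^{\frac{p-2}{2}}$ is \emph{small} where $|Dv|$ is large, so the coercive term is weak; one must therefore split each right-hand side integral on $\{|Dv|\le1\}$ and $\{|Dv|>1\}$ and on the bad set insert the factor $(\mu^2+|Dv|^2)^{\frac{p-2}{4}}$ artificially, paying a power of $(1+|Dv|^2)^{\frac{2-p}{4}}$, then use Young's inequality with a small parameter $\eps$ to move $\eps\int\eta^2|\tau_{s,h}(V_p(Dv))|^2$ to the left. What remains on the right is controlled by $\int_{B_R}(1+|Dv|^2)^{\frac{\th}{2}}$ for some exponent $\th$ with $p<\th<q^\ast$-type range, times $\|k\|_{L^r}^2$ or $\|D\eta\|_\infty^2$, and here one invokes the gap condition \eqref{gapa}: the exponent $\th$ produced by the interplay between the $q$-growth term and the $\frac1r$-loss from $k\in L^r$, after applying Hölder and the Sobolev inequality $\|V_p(Dv)\|_{L^{2^\ast}}\le c\|D(V_p(Dv))\|_{L^2}$ (which converts $|Dv|$ into a higher integrability $|Dv|^{\frac{p}{2}\cdot 2^\ast}=|Dv|^{\frac{pn}{n-2}}$), is \emph{strictly below} what the gap allows, so that a further Young inequality closes the estimate and leaves only $\int_{B_R}(1+|Dv|)^p$ on the right. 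This is the step I expect to be the main difficulty: balancing the three exponents $q/p$, $1/n-1/r$, and $2(p-1)/(p(n-2))$ so that every term is absorbable, and keeping track of the final power $\gamma$.

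Once \eqref{dersec02} is established, \eqref{dersec0} follows from the elementary pointwise inequality $|D^2v|^p\le c\,(\mu^2+|Dv|^2)^{\frac{2-p}{4}\cdot\frac{2p}{?}}|D(V_p(Dv))|^p\cdots$; more precisely, since $|D(V_p(Dv))|^2\simeq(\mu^2+|Dv|^2)^{\frac{p-2}{2}}|D^2v|^2$, one writes $|D^2v|^p=\big((\mu^2+|Dv|^2)^{\frac{p-2}{2}}|D^2v|^2\big)^{\frac p2}(\mu^2+|Dv|^2)^{\frac{(2-p)p}{4}}$ and applies Hölder with exponents $\tfrac2p$ and $\tfrac{2}{2-p}$, bounding the second factor by $\int_{B_{R/2}}(1+|Dv|)^{p}$ at the cost of raising $\gamma$. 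Finally, for \textbf{existence}: I would regularize $\mathcal{A}$ by adding $\eps(1+|\xi|^2)^{\frac{q-2}{2}}\xi$ (and mollifying in $x$ if needed) to obtain operators $\mathcal{A}_\eps$ of standard $q$-growth satisfying \eqref{(A2)}--\eqref{(A3)} with uniform constants; the corresponding problems have solutions $v_\eps\in u_0+W^{1,q}_0$ by monotone operator theory, the a priori estimates above apply to $v_\eps$ \emph{uniformly in $\eps$} (the $\eps$-term only helps coercivity), hence $V_p(Dv_\eps)$ is bounded in $W^{1,2}_{\mathrm{loc}}$ and $v_\eps$ in $W^{1,p}$; passing to a weak limit $v$, using the uniform bound to upgrade to strong $L^p$ convergence of $Dv_\eps$ (Minty's trick together with \eqref{(A2)}), one identifies $v$ as a solution of \eqref{prob} in $u_0+W^{1,p}_0(\Omega)\cap W^{2,p}_{\mathrm{loc}}(\Omega)$, and the estimates \eqref{dersec02}--\eqref{dersec0} pass to the limit by lower semicontinuity. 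The delicate point in this last part is justifying the test function $\varphi=\tau_{s,-h}(\eta^2\tau_{s,h}v_\eps)$ at the $\eps$-level — this is legitimate precisely because $v_\eps\in W^{1,q}_{\mathrm{loc}}$, which is why the regularization is introduced in the first place.
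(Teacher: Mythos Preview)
Your proposal is correct and follows essentially the same route as the paper: difference-quotient test function, monotonicity on the left, the three error terms on the right, insertion of the weight $(\mu^2+|Dv|^2)^{(p-2)/4}$ and Young to absorb, Sobolev embedding $V_p(Dv)\in W^{1,2}\Rightarrow Dv\in L^{2^*p/2}$, interpolation between $L^p$ and $L^{2^*p/2}$ closed by the gap condition \eqref{gapa}, and the hole-filling iteration (Lemma~\ref{iter}); then the regularization $\mathcal{A}_\eps=\tilde{\mathcal{A}}_\eps+\eps(1+|\xi|^2)^{(q-2)/2}\xi$ with mollification in $x$, uniform estimates, and passage to the limit.

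Two small points where the paper differs from your sketch. First, your stated a priori regularity ``$v\in W^{1,q}_{\mathrm{loc}}\cap W^{2,p}_{\mathrm{loc}}$'' is not quite the right hypothesis: the interpolation step requires $Dv\in L^{2^*p/2}_{\mathrm{loc}}$ a priori (so that the right-hand side integrals with exponents $\tfrac{qr}{r-2}$ and $\tfrac{p'(p+q-2)}{2}$ are finite), and neither $W^{1,q}$ nor $W^{2,p}$ alone gives this when $p<2$. The paper obtains it for the approximations $v_\eps$ by invoking Tolksdorf's regularity \cite{Tolksdorf} for the smooth-coefficient $q$-growth problem, which yields $v_\eps\in W^{2,q}_{\mathrm{loc}}$ and hence $Dv_\eps\in L^{2^*q/2}_{\mathrm{loc}}\subset L^{2^*p/2}_{\mathrm{loc}}$; this, not merely the admissibility of the test function, is the real reason the regularization is needed. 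Second, for the limit passage the paper does not use Minty's trick: the uniform $W^{2,p}_{\mathrm{loc}}$ bound already gives strong local $W^{1,p}$ convergence of $v_\eps$ by compact embedding, and then one checks directly that each term in the weak formulation passes to the limit.
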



The novelty of Theorem \ref{main1} is in two directions 
\vskip.2cm
{First, we consider operators with  $p,q$ growth, which means that  the  ellipticity of the leading part of $\xi\mapsto \mathcal{A}(x,\xi)$ has $p$-growth (see \eqref{(A2)}) while it satisfies a   $q$- bound (see \eqref{(A1)}), with $1<p<q$ and $1<p<2$.
Roughly speaking, we are dealing with the subquadratic non standard growth case which means that the problem under consideration can be singular and degenerate.}
\\
{In the last few years, the study of the regularity properties of solution and minimizers of problems with non standard growth conditions has undergone remarkable  developments, motivated in part by the applications.
We would mention the first papers of Marcellini  \cite{mar89, mar91, mar93} and the  recent \cite{mar20-1, mar20-2, mar20-3}, the result on higher integrability and differentiability  
in \cite{elm1, elm2, lms2, Sbordone} and more recently \cite{ CKP, CKP1, cupleomas, cupgiagiopass, cupmarmaspass, EMM1, EMM2, EMM3, EMM4}. 
{For complete details and references on problems with non standard growth we refer to the recent surveys \cite{ mar20-2, MR}} 
\\
It is well known that a restriction between $p$ and $q$ is necessary by virtue of  the celebrated counterexample by Marcellini (see \cite{mar91})}.
\vskip.2cm
The second principal feature is the Sobolev dependence of $\mathcal{A}(x,\xi)$  with respect to $x$ (see \eqref{(A3)}).

Recently, there has been an increasing interest in the study of the regularity under this assumptions on the function that measures the oscillation of the operator $\mathcal{A}(x,\xi)$ with respect to the $x$-variable.
The degeneracy of $\mathcal{A}(x,\xi)$ has the same nature of 
$$\mathrm{div}\big(|Du|^{p-2}Du+a(x)|Du|^{q-2}Du\big)=0$$
related to the double phase functional
\begin{equation}\label{double}
	\mathcal{I}(u)=\int_\Omega |Du|^p+a(x)|Du|^q
\end{equation}
which  has been intensively studied starting from the papers  \cite{colmin, colmin2}. For references on the regularity properties of minimizers to \eqref{double}, see  also \cite{BM} and \cite{DM}. 

Actually, it is now  completely clear that  the weak differentiability of the partial map $\xi\to \mathcal{A}(x,\xi)$ leads to an higher differentiability of the gradient of the 
solutions (see for example \cite{cupgiagiopass, Gentile, Gentile2, giova1, giova2, passarelli1, passarelli2}. 

In \cite{EMM3,EMM4} for integral functionals under $p,q$,  growth with Sobolev coefficients in $L^r$, $r>n$,  it has been shown that the bound $\frac{q}{p}<1+\frac{1}{n}-\frac{1}{r}$ permit to obatin the local Lipschitz continuity of the minimizers.

 {Moreover, it is worth mentioning that the bound at \eqref{gapa} has been already used in \cite{DM} (see also \cite{BM}) for the study of problems with subquadratic non standard growth conditions.

In the standard growth, i.e. $1<p=q<2$ in assumption \eqref{(A1)}--\eqref{(A3)},  and Lipschitz continuous coefficients, i.e. $k\in L^\infty_{\mathrm{loc}}(\Omega)$ in assumption \eqref{(A3)}, the regularity is due to Tolksdorf (\cite{Tolksdorf}) (see also \cite[Chapter 8]{Giusti}), for the case of vector-valued minimizers  see \cite{af}. 

In case of $p,q$ growth with  $2\leq p<q$, in \cite{mar91, mar20-1, mar20-2} Marcellini established the $W^{2,2}_{\mathrm{loc}}$ regularity for local weak solution $u \in W^{1,q}(\Omega)$.

More recently, in the  $p$-growth case, $1<p<2$ and $k\in L^r_{\mathrm{loc}}(\Omega)$ with $r\ge n$ in  \eqref{(A3)}, an higher differentiability result 
for local minimizers of integral functionals has been established in \cite{Gentile, Gentile2}.

A first main step in the proof of our main result   is 
 an a-priori estimate for the $W^{2,p}$-norm of the weak local solution  $u$ of 
 of the equation 
$$\mathrm{div}\,(\mathcal{A}(x,Du))=b(x)$$
i.e. $ u \in W^{1,q}_{\mathrm{loc}}$    such that   
$$
\int \mathcal{A}(x,Du) D\varphi(x)\, dx= \int b(x) \varphi(x) \, dx\qquad \text{for all}\,\,\varphi \in C^{\infty}_0(\Omega).
$$
{We prove that if  $Du\in L^{\frac{2^*}{2}p}$} then it   locally belongs to $W^{2,p}$ and  the norm of its second derivatives can be estimated by the $L^p$ norm of its gradient.
\\
Actually, as far as we know, our a-priori estimate is a first  regularity result for weak local solutions of an equation under $p,q$-growth $1<p<q$, $1<p<2$, and a Sobolev assumption on the partial map $x\mapsto \mathcal{A}(x,\xi)$ (see \eqref{(A3)}). 

Then we give an existence and regularity result for  the Dirichlet boundary problem \eqref{prob}.

More precisely,  the solution to  problem \eqref{prob} which belongs to $W^{2,p}$ locally is constructed as a $W^{1,q}$-limit of a sequence of solution to regular equations with standard $q$-growth of the form
$$\mathcal{A}_\varepsilon(x,\xi)=\tilde{\mathcal{A}}_\varepsilon(x,\xi)+\varepsilon(1+|\xi|^2)^{\frac{q-2}{2}}\xi,$$

where $\tilde{\mathcal{A}}_\varepsilon(x,\xi)$ is the regularized of $\mathcal{A}(x,\xi)$  with respect to the variable $x$, i.e.
$$\tilde{\mathcal{A}}_\varepsilon(x,\xi)=\int_{B_1(0)}\phi(\omega)\mathcal{A}(x+\varepsilon\omega,\xi)\,d\omega,$$
with $\phi$ a smooth mollifier. 

Each problem  
has a unique smooth  solution $v_{\varepsilon} \in W^{1,q}_0+u_0$ in view of the available existence and regularity results.

Then, we apply to the sequence of approximating solutions the a-priori estimate which takes into account only the assumptions \eqref{(A1)}--\eqref{(A3)} and therefore is independent of $\varepsilon$. Therefore we obtain a uniform control  for the norm of $v_{\varepsilon}$ in  $W^{2,p}$. By passing to the limit we obtain a solution $v$ of \eqref{prob} with higher differentiability properties.
\\
Let us remark that in subquadratic $p,q$- growth case, in \cite{lms} a  higher integrability result has been established  when$$\frac{2n}{n+2}\le p\le q\le 2.$$
It is worth to observe that if $\frac{2n}{n+2}\le p$, the restriction on the gap \eqref{gapa} reduces to
$$\frac{q}{p}<1+\frac{1}{n}-\frac{1}{r}$$
which is the sharp one found in \cite{elm1} to establish the local Lipschitz continuity of minimizers of degenerate functionals with $(p,q)$ growth conditions and Sobolev coefficients. Indeed
$$p\ge \frac{2n}{n+2}\,\,\Longrightarrow\,\, \frac{p-1}{p}\ge \frac{n-2}{2n}\,\,\Longrightarrow\,\, \frac{2(p-1)}{p(n-2)}\ge \frac{1}{n}$$
and so 
$$\min\left\{\frac{1}{n}-\frac{1}{r},\, \frac{2(p-1)}{p(n-2)}\right\}=\frac{1}{n}-\frac{1}{r}.$$

%

\vskip 2cm
\section{Preliminary results}
\noindent
In this paper we shall denote by $C$ or $c$  a
general positive constant that may vary on different occasions, even within the
same line of estimates.
Relevant dependencies  will be suitably emphasized using
parentheses or subscripts.  In what follows, $B(x,r)=B_r(x)=\{y\in \R^n:\,\, |y-x|<r\}$ will denote the ball centered at $x$ of radius $r$.
We shall omit the dependence on the center and on the radius when no confusion arises.


The following lemma has important applications in the so called
hole-filling method. Its proof can be found for example in
\cite[Lemma 6.1]{Giusti} .
\medskip
\begin{lemma}\label{iter} Let $h:[r, R_{0}]\to \mathbb{R}$ be a nonnegative bounded function and $0<\vartheta<1$,
$A, B\ge 0$ and $\beta>0$. Assume that
$$
h(s)\leq \vartheta h(t)+\frac{A}{(t-s)^{\beta}}+B,
$$
for all $r\leq s<t\leq R_{0}$. Then
$$
h(r)\leq \frac{c A}{(R_{0}-r)^{\beta}}+cB ,
$$
where $c=c(\vartheta, \beta)>0$.
\end{lemma}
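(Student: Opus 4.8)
The plan is to iterate the assumed recursive inequality along a geometric sequence of radii that increases from $r$ up to $R_0$, and then to sum the resulting geometric series, using the boundedness of $h$ to discard the tail term.

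First I would fix a parameter $\tau\in(0,1)$, to be chosen later, and set $\rho_0=r$ together with $\rho_{i+1}=\rho_i+(1-\tau)\tau^i(R_0-r)$ for $i\ge 0$, so that $\rho_i=r+(1-\tau^i)(R_0-r)$ increases to $R_0$ and $\rho_{i+1}-\rho_i=(1-\tau)\tau^i(R_0-r)$. Applying the hypothesis with $s=\rho_i$ and $t=\rho_{i+1}$ gives
$$h(\rho_i)\le \vartheta\, h(\rho_{i+1})+\frac{A}{[(1-\tau)(R_0-r)]^{\beta}}\,\tau^{-\beta i}+B .$$
Iterating this bound $k$ times one obtains
$$h(r)=h(\rho_0)\le \vartheta^{k}\, h(\rho_k)+\frac{A}{[(1-\tau)(R_0-r)]^{\beta}}\sum_{i=0}^{k-1}\bigl(\vartheta\,\tau^{-\beta}\bigr)^{i}+B\sum_{i=0}^{k-1}\vartheta^{i}.$$

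The key step is then to choose $\tau$ so that $\vartheta\,\tau^{-\beta}<1$; since $\vartheta<1$ this is always possible, for instance with the explicit choice $\tau=\bigl(\tfrac{1+\vartheta}{2}\bigr)^{1/\beta}$, for which $\vartheta\,\tau^{-\beta}=\tfrac{2\vartheta}{1+\vartheta}<1$ and $\vartheta^{1/\beta}<\tau<1$. With such a $\tau$ both geometric series converge; moreover, since $h$ is bounded on $[r,R_0]$ and $\vartheta^{k}\to 0$, the tail term satisfies $\vartheta^{k}h(\rho_k)\to 0$ as $k\to\infty$. Letting $k\to\infty$ therefore yields
$$h(r)\le \frac{A}{[(1-\tau)(R_0-r)]^{\beta}}\cdot\frac{1}{1-\vartheta\,\tau^{-\beta}}+\frac{B}{1-\vartheta}=\frac{cA}{(R_0-r)^{\beta}}+cB ,$$
with $c=c(\vartheta,\beta)$ depending only on the stated quantities (the explicit $\tau$ above gives a concrete value).

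The only delicate point — indeed the only hypothesis used beyond the recursive inequality itself — is the boundedness of $h$, which is exactly what forces the tail $\vartheta^{k}h(\rho_k)$ to vanish in the limit; without it the conclusion is false. Everything else reduces to the summation of two convergent geometric series, so I do not expect any real obstacle.
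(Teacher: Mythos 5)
Your proof is correct and is exactly the standard iteration argument from Giusti's Lemma~6.1, which is the reference the paper points to instead of giving its own proof. The choice $\tau=\bigl(\tfrac{1+\vartheta}{2}\bigr)^{1/\beta}$ makes $\vartheta\tau^{-\beta}<1$ as required, the geometric series sum cleanly, and boundedness of $h$ is used precisely where it must be to kill the tail $\vartheta^{k}h(\rho_k)$.
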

We  introduce the usual notation
\begin{eqnarray*}
	2^*=\begin{cases} \frac{2n}{n-2}\qquad\qquad\qquad\qquad\qquad\qquad \text{if}\,\, n>2\\
		\text{any finite exponent}\,\, t>2 \qquad\,\,\,\text{if}\,\, n=2
	\end{cases}
\end{eqnarray*}

\noindent For $p>1$ and $\mu\in [0,1]$, let us define
\begin{equation}\label{Vi}
	V_p(\xi)=(\mu^2+|\xi|^2)^{\frac{p-2}{4}}\xi, \qquad \xi\in \mathbb{R}^n
\end{equation}
We shall use the following estimates, whose proof can be found in \cite{af} (see also \cite[Step 2]{mar85}).
\begin{lemma}\label{Vi}
Let $1<p<2$. There exists a constant $c=c(n,p)>0$
such that, for any $\xi$, $\eta \in \R^n$, $\xi\not=\eta$, it holds
\begin{equation}\label{VIes}
c^{-1}\Bigl(\mu^2 +| \xi |^2+| \eta |^2 \Bigr)^{\frac{p-2}{2}}\leq
\frac{|V_{p}(\xi )-V_{p}(\eta )|^2}{|\xi -\eta |^2} \leq
c\Bigl( \mu^2 +|\xi |^2+|\eta |^2 \Bigr)^{\frac{p-2}{2}}	
\end{equation}
\end{lemma}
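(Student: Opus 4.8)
The plan is to prove the two-sided pointwise bound \eqref{VIes} by a direct computation, distinguishing whether $\mu>0$ or $\mu=0$ and, in the latter case, arguing by a scaling/homogeneity reduction. First I would observe that for fixed $\xi\neq\eta$ the quantity
$$\frac{|V_p(\xi)-V_p(\eta)|^2}{|\xi-\eta|^2}$$
is, up to a constant, an average of $(\mu^2+|z|^2)^{\frac{p-2}{2}}$ over the segment joining $\xi$ and $\eta$. More precisely, writing $g(z)=(\mu^2+|z|^2)^{\frac{p-2}{4}}z$ (so $V_p=g$ componentwise), one has
$$V_p(\xi)-V_p(\eta)=\int_0^1 \frac{d}{dt}\,g\big(\eta+t(\xi-\eta)\big)\,dt=\int_0^1 Dg\big(\eta+t(\xi-\eta)\big)\,dt\;(\xi-\eta),$$
and a short computation gives
$$Dg(z)=(\mu^2+|z|^2)^{\frac{p-2}{4}}\left(\mathrm{Id}+\frac{p-2}{2}\,\frac{z\otimes z}{\mu^2+|z|^2}\right).$$
Since $1<p<2$, the symmetric matrix in parentheses has eigenvalues $1$ (with multiplicity $n-1$) and $\frac{p}{2}\in(\tfrac12,1)$; hence for every vector $w$,
$$\tfrac{p}{2}(\mu^2+|z|^2)^{\frac{p-2}{4}}|w|\le |Dg(z)w|\le (\mu^2+|z|^2)^{\frac{p-2}{4}}|w|,$$
and moreover $Dg(z)$ is positive definite with the same eigenvalue bounds.

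Next I would use these bounds to control the integral. For the upper bound one estimates directly
$$|V_p(\xi)-V_p(\eta)|\le |\xi-\eta|\int_0^1(\mu^2+|\eta+t(\xi-\eta)|^2)^{\frac{p-2}{4}}\,dt,$$
and since $p-2<0$ the integrand is largest where $|\eta+t(\xi-\eta)|$ is smallest; using $|\eta+t(\xi-\eta)|^2\ge \tfrac12\big(|\xi|^2+|\eta|^2\big)-c|\xi-\eta|^2$ together with elementary case analysis (or, more cleanly, the known fact that $\mu^2+|\eta+t(\xi-\eta)|^2$ is comparable, uniformly in $t$ away from the endpoints, to $\mu^2+|\xi|^2+|\eta|^2$ after splitting $[0,1]$ into $[0,\tfrac12]$ and $[\tfrac12,1]$ and using monotonicity of $s\mapsto(\mu^2+s)^{\frac{p-2}{2}}$) yields
$$|V_p(\xi)-V_p(\eta)|\le c(n,p)\,|\xi-\eta|\,(\mu^2+|\xi|^2+|\eta|^2)^{\frac{p-2}{4}},$$
which is the right-hand inequality in \eqref{VIes}. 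For the lower bound, positive-definiteness of $Dg$ gives
$$\langle V_p(\xi)-V_p(\eta),\xi-\eta\rangle=\int_0^1\langle Dg(\eta+t(\xi-\eta))(\xi-\eta),\xi-\eta\rangle\,dt\ge \tfrac{p}{2}|\xi-\eta|^2\int_0^1(\mu^2+|\eta+t(\xi-\eta)|^2)^{\frac{p-2}{4}}\,dt,$$
and by Cauchy--Schwarz the left side is at most $|V_p(\xi)-V_p(\eta)|\,|\xi-\eta|$; bounding the remaining integral from below by $c(\mu^2+|\xi|^2+|\eta|^2)^{\frac{p-2}{4}}$ (again using $p-2<0$, so now the integrand is smallest at the worst point, and $|\eta+t(\xi-\eta)|^2\le 2(|\xi|^2+|\eta|^2)$) gives the left-hand inequality.

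The main technical obstacle is the degenerate case $\mu=0$ combined with the two endpoints being nearly antipodal, where the segment from $\xi$ to $\eta$ passes through (or near) the origin and the integrand $(|z|^2)^{\frac{p-2}{4}}$ blows up; one must check the integral $\int_0^1|\eta+t(\xi-\eta)|^{\frac{p-2}{2}}\,dt$ still converges and is comparable to $(|\xi|^2+|\eta|^2)^{\frac{p-2}{4}}$. This is handled by the homogeneity of $V_p$ when $\mu=0$: both sides of \eqref{VIes} are homogeneous of degree $p-2$ under $(\xi,\eta)\mapsto(\lambda\xi,\lambda\eta)$, so one may normalize $|\xi|^2+|\eta|^2=1$ and then the claimed inequalities reduce to showing that the continuous function $(\xi,\eta)\mapsto |V_p(\xi)-V_p(\eta)|^2/|\xi-\eta|^2$ is bounded above and below by positive constants on the compact set $\{|\xi|^2+|\eta|^2=1,\ \xi\neq\eta\}$ — the only point to verify is that it does not degenerate as $\xi\to\eta$ (which follows from the $Dg$ bounds, since $|V_p(\xi)-V_p(\eta)|/|\xi-\eta|\to|Dg(\xi)\,\tfrac{\xi-\eta}{|\xi-\eta|}|$ stays between $\tfrac p2|\xi|^{\frac{p-2}{2}}$ and $|\xi|^{\frac{p-2}{2}}$, both positive for $\xi\neq0$) and does not blow up or vanish in the antipodal limit (finite because $\int_0^1|1-2t|^{\frac{p-2}{2}}\,dt<\infty$ for $p>1$). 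Alternatively, for a self-contained treatment one can simply refer to \cite{af} and \cite[Step 2]{mar85}, where exactly this estimate is carried out.
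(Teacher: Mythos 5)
The paper does not prove this lemma: it simply states it and cites Acerbi--Fusco \cite{af} and Marcellini \cite[Step 2]{mar85}, so there is no in-paper argument to compare against. Your proof is, however, essentially the standard one found in those references: write $V_p(\xi)-V_p(\eta)=\int_0^1 Dg(\eta+t(\xi-\eta))\,dt\,(\xi-\eta)$ with $g(z)=(\mu^2+|z|^2)^{(p-2)/4}z$, use the explicit form of $Dg$ to get two-sided eigenvalue bounds, and then show that $\int_0^1(\mu^2+|\eta+t(\xi-\eta)|^2)^{(p-2)/4}\,dt$ is comparable to $(\mu^2+|\xi|^2+|\eta|^2)^{(p-2)/4}$. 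The structure and the conclusion are correct, so the proposal is sound. Two small points are worth tightening. First, the eigenvalue of $\mathrm{Id}+\frac{p-2}{2}\frac{z\otimes z}{\mu^2+|z|^2}$ in the direction of $z$ equals $1+\frac{p-2}{2}\frac{|z|^2}{\mu^2+|z|^2}$, which is exactly $\frac p2$ only when $\mu=0$; for $\mu>0$ it lies in $(\frac p2,1]$. The inequality you want, namely that all eigenvalues lie in $[\frac p2,1]$, still holds, but the statement ``eigenvalues $1$ and $\frac p2$'' should be replaced by a lower bound. Second, the intermediate inequality $|\eta+t(\xi-\eta)|^2\ge\tfrac12(|\xi|^2+|\eta|^2)-c|\xi-\eta|^2$ is vacuous in the near-antipodal regime (its right-hand side can be negative), so it does not actually carry the upper bound; what carries it is the scaling-plus-compactness argument you then give, or alternatively a direct case split on $|\xi-\eta|\lessgtr\tfrac12\max(|\xi|,|\eta|)$ as in \cite{af}. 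Finally, when invoking homogeneity to reduce to a compact set you should scale $(\mu,\xi,\eta)\mapsto(\lambda\mu,\lambda\xi,\lambda\eta)$ jointly, under which $V_p$ is homogeneous of degree $p/2$, so that the normalization $\mu^2+|\xi|^2+|\eta|^2=1$ applies for every $\mu\ge0$ and not only for $\mu=0$; on the resulting compact set the only degenerations to check are $\xi\to\eta$ and $(\mu,\xi+\eta)\to 0$, and you handle both correctly.
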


\begin{lemma}\label{rem}
Let $1<p<2$ and $u\in W^{1,1}_{\mathrm{loc}}(\Omega)$. Then 
	\begin{equation*}
		V_p(Du)\in L^{2^*}_{\loc}(\Omega)\quad\Longrightarrow\quad Du\in L^{\frac{2^*p}{2}}_{\loc}(\Omega).
	\end{equation*}
\end{lemma}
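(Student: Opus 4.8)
The plan is to prove the implication by a purely pointwise argument: I will show that, for $1<p<2$ and $\mu\in[0,1]$, the function $V_p$ satisfies a lower growth bound of the form $|V_p(\xi)|\gtrsim|\xi|^{p/2}$ for $|\xi|$ large, so that an $L^{2^*}$ bound on $V_p(Du)$ forces an $L^{2^*p/2}$ bound on $Du$. No compactness, Sobolev embedding, or use of the equation is needed; the statement is entirely about the algebraic growth of $\xi\mapsto V_p(\xi)$.

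First I would establish the elementary inequality
\begin{equation*}
|\xi|^{\frac{2^*p}{2}}\le c\big(1+|V_p(\xi)|^{2^*}\big)\qquad\text{for all }\xi\in\mathbb{R}^n,
\end{equation*}
with a constant $c=c(n,p)$. To see this, note that since $\tfrac{p-2}{4}<0$ and $\mu^2+|\xi|^2\le 1+|\xi|^2\le(1+|\xi|)^2$, raising to the power $\tfrac{p-2}{4}$ reverses the inequality and gives $|V_p(\xi)|=(\mu^2+|\xi|^2)^{\frac{p-2}{4}}|\xi|\ge(1+|\xi|)^{\frac{p-2}{2}}|\xi|$. When $|\xi|\ge1$ one has $1+|\xi|\le 2|\xi|$, and, using once more that $\tfrac{p-2}{2}<0$, we obtain $|V_p(\xi)|\ge 2^{\frac{p-2}{2}}|\xi|^{p/2}$; raising to the power $2^*$ and including the bounded contribution of the region $|\xi|\le1$ in the additive constant yields the displayed inequality. (Equivalently, the same two-sided bound can be read off Lemma \ref{Vi} by taking $\eta=0$ and using $V_p(0)=0$.)

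Then I would specialize $\xi=Du(x)$ and integrate over an arbitrary open set $\Omega'\Subset\Omega$: since $u\in W^{1,1}_{\loc}(\Omega)$ the integrand on the left is measurable, while the hypothesis $V_p(Du)\in L^{2^*}_{\loc}(\Omega)$ makes the right-hand side finite on $\Omega'$, so that
\begin{equation*}
\int_{\Omega'}|Du|^{\frac{2^*p}{2}}\,dx\ \le\ c\,|\Omega'|+c\int_{\Omega'}|V_p(Du)|^{2^*}\,dx\ <\ \infty.
\end{equation*}
Since $\Omega'\Subset\Omega$ is arbitrary, this gives $Du\in L^{\frac{2^*p}{2}}_{\loc}(\Omega)$, which is the claim.

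As for difficulties, there is essentially no serious obstacle: the only points requiring a little care are the reversals of the inequalities when taking powers with the negative exponents $\tfrac{p-2}{4}$ and $\tfrac{p-2}{2}$, and the separate (trivial) treatment of the region $|\xi|\le1$, where $V_p$ is continuous and bounded. The lemma is really just the observation that $V_p$ behaves like $|\xi|^{p/2}$ at infinity.
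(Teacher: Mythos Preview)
Your proof is correct and follows essentially the same approach as the paper: both arguments establish the pointwise inequality $|\xi|^{\frac{2^*p}{2}}\le c\big(1+|V_p(\xi)|^{2^*}\big)$ by splitting into small and large $|\xi|$ and then integrate. The only cosmetic difference is that the paper splits at $|\xi|=\mu$ (treating $\mu=0$ separately), whereas you split at $|\xi|=1$, which handles all $\mu\in[0,1]$ uniformly; the resulting integral estimates are identical.
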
	  
	 
\begin{proof}	Note that the thesis is obvious if $\mu=0$.
	In case $\mu>0$, we have
	\begin{eqnarray*}
		&&\int_{B_R}|Du|^{\frac{2^*p}{2}}\,dx=\int_{B_R}\left||Du|^{\frac{p}{2}-1}Du\right|^{2^*}\,dx\cr\cr
		&=&\int_{\{x\in B_R:\, |Du|\le \mu\}}\left||Du|^{\frac{p}{2}-1}Du\right|^{2^*}\,dx+\int_{\{x\in B_R:\, |Du|> \mu\}}\left||Du|^{\frac{p}{2}-1}Du\right|^{2^*}\,dx\cr\cr
		&=&\int_{\{x\in B_R:\, |Du|\le \mu\}}\left||Du|^{\frac{p}{2}-1}Du\right|^{2^*}\,dx+\int_{\{x\in B_R:\, |Du|> \mu\}}\left|\left(\frac{|Du|^2}{2}+\frac{|Du|^2}{2}\right)^{\frac{p-2}{4}}|Du|\right|^{2^*}\,dx\cr\cr
		&\le&\mu^{\frac{2^*p}{2}}|B_R|+\int_{\{x\in B_R:\, |Du|> \mu\}}\left|\left(\frac{\mu^2}{2}+\frac{|Du|^2}{2}\right)^{\frac{p-2}{4}}|Du|\right|^{2^*}\,dx,
	\end{eqnarray*}
	since $p-2<0$. Therefore, we have
	\begin{eqnarray}\label{stinorma}
		\int_{B_R}|Du|^{\frac{2^*p}{2}}\,dx&=&\le \mu^{\frac{2^*p}{2}}|B_R|+\int_{\{x\in B_R:\, |Du|> \mu\}}\left|V_p(Du)\right|^{2^*}\,dx\cr\cr
		&\le & \int_{B_R}\left(1+\left|V_p(Du)\right|^{2^*}\right)\,dx<+\infty,
	\end{eqnarray}
	where we also used that $\mu\le 1$.
\end{proof}

\begin{lemma}\label{VD}
Let $1<p<2$ and  $u\in W^{1,1}_{\mathrm{loc}}(\Omega)$. 	Then
$$V_p(Du)\in W^{1,2}_{\mathrm{loc}}(\Omega)\quad\Longrightarrow\quad u\in W^{2,p}_{\mathrm{loc}}(\Omega)$$
\end{lemma}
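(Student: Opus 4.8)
The plan is to show that if $V_p(Du) \in W^{1,2}_{\mathrm{loc}}(\Omega)$ then each second-order partial derivative $D_{ij}u$ exists as an $L^p_{\mathrm{loc}}$ function. The starting point is Lemma \ref{rem}, which already tells us that $V_p(Du) \in W^{1,2}_{\mathrm{loc}}(\Omega) \subset L^2_{\mathrm{loc}}(\Omega)$ — indeed $W^{1,2}_{\mathrm{loc}} \hookrightarrow L^{2^*}_{\mathrm{loc}}$ — so that $Du \in L^{\frac{2^*p}{2}}_{\mathrm{loc}}(\Omega)$, and in particular $Du \in L^p_{\mathrm{loc}}(\Omega)$ with plenty of room to spare. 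Thus $u \in W^{1,p}_{\mathrm{loc}}(\Omega)$ and it remains only to control second derivatives.

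The key algebraic step is to invert the map $\xi \mapsto V_p(\xi)$. Writing $w = V_p(\xi) = (\mu^2 + |\xi|^2)^{\frac{p-2}{4}}\xi$, one checks that $|w|$ is a strictly increasing function of $|\xi|$, so $\xi = G(w)$ for a well-defined continuous map $G \colon \mathbb{R}^n \to \mathbb{R}^n$; moreover $G$ is locally Lipschitz on $\mathbb{R}^n \setminus \{0\}$ when $\mu = 0$ and globally locally Lipschitz when $\mu > 0$, with the growth bound $|G(w)| \le c\,(\mu + |w|)^{\frac{2}{p}}$ coming from the fact that $|\xi| \sim |w|^{\frac{2}{p}}$ for $|\xi|$ large. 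The subtle point, exactly as in Lemma \ref{rem}, is the behaviour near the origin when $\mu = 0$, where $G$ fails to be Lipschitz but is still Hölder continuous; this is why the conclusion is only $W^{2,p}$ and not $W^{2,2}$.

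To produce the second derivatives I would use a difference-quotient argument together with the two-sided bound \eqref{VIes} of Lemma \ref{Vi}. For a direction $e_s$ and small $h$, estimate
\begin{equation*}
\int_{B}\left|\tau_{s,h}Du\right|^p\,dx \le c\int_B \left|V_p(Du(x+he_s)) - V_p(Du(x))\right|^p \left(\mu^2 + |Du(x+he_s)|^2 + |Du(x)|^2\right)^{\frac{(2-p)p}{4}}\,dx,
\end{equation*}
where $\tau_{s,h}g(x) = g(x+he_s) - g(x)$; this is the content of the left inequality in \eqref{VIes} raised to the power $p/2$. Apply Hölder with exponents $\frac{2}{p}$ and $\frac{2}{2-p}$: the first factor gives $\big(\int |\tau_{s,h}V_p(Du)|^2\big)^{p/2}$, which is uniformly bounded in $h$ by $|h|^p\int_B |D(V_p(Du))|^2$ since $V_p(Du) \in W^{1,2}_{\mathrm{loc}}$, and the second factor gives $\big(\int (\mu^2 + |Du|^2)^{\frac{p}{2}}\big)^{\frac{2-p}{2}}$ (up to translation), which is finite because $Du \in L^p_{\mathrm{loc}}$. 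Dividing by $|h|^p$ yields $\sup_{|h|>0}\int_{B'}|\tau_{s,h}Du/h|^p \le c(\|D(V_p(Du))\|_{L^2(B)}, \|Du\|_{L^p(B)})$ on any $B' \Subset B$, and the standard characterization of Sobolev spaces by bounded difference quotients gives $D_s Du \in L^p_{\mathrm{loc}}(\Omega)$ for every $s$, i.e. $u \in W^{2,p}_{\mathrm{loc}}(\Omega)$.

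The main obstacle is the degeneracy at $|Du| = 0$ when $\mu = 0$: there $V_p$ is not bilipschitz onto its image, so one genuinely needs the weighted form of \eqref{VIes} rather than a naive inversion of $G$, and one must be careful that the weight $(\mu^2 + |Du(x+he_s)|^2 + |Du(x)|^2)^{\frac{(2-p)p}{4}}$ is integrated against the correct Hölder conjugate so that only the already-available $L^p$ integrability of $Du$ is used. Everything else — the difference-quotient bookkeeping, the translation invariance, the passage to the limit $h \to 0$ — is routine.
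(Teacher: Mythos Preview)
Your proof is correct and follows essentially the same route as the paper: the core step is H\"older's inequality with exponents $\tfrac{2}{p}$ and $\tfrac{2}{2-p}$ together with the two-sided bound \eqref{VIes}. The paper's version is a one-line formal computation of $\int_{B_R}|D^2u|^p$ that tacitly assumes the second derivatives already exist, whereas your difference-quotient argument is the rigorous incarnation of the same idea and actually establishes their existence; the digression on inverting $V_p$ via a map $G$ is unnecessary (and you do not in fact use it), but the rest is sound.
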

\begin{proof}
	By the use of H\"older's inequality we get
	\begin{eqnarray*}
	\int_{B_R}|D^2u|^p&=&\int_{B_R}|D^2u|^p(\mu^2+|Du|^2)^{\frac{p(p-2)}{4}}(\mu^2+|Du|^2)^{\frac{p(2-p)}{4}}\cr\cr
	&\le&\left(	\int_{B_R}|D^2u|^2(\mu^2+|Du|^2)^{\frac{p-2}{2}}\right)^{\frac{p}{2}}\left(	\int_{B_R}(\mu^2+|Du|^2)^{\frac{p}{2}}\right)^{\frac{2-p}{2}}\cr\cr
	&\le &c\left(	\int_{B_R}|D(V_p(Du))|^2\right)^{\frac{p}{2}}\left(	\int_{B_R}(\mu^2+|Du|^2)^{\frac{p}{2}}\right)^{\frac{2-p}{2}}
	\end{eqnarray*}

\end{proof}

\subsection{Difference quotient}
\medskip
\noindent
We recall some properties of the finite difference
operator  needed in the sequel.
 We start with the description of some elementary properties that can be found, for example, in \cite{Giusti}.

\begin{proposition}\label{findiffpr}

Let $f$ and $g$ be two functions such that
$f, g\in W^{1,p}(\Omega)$, with $p\geq 1$, and let us consider the set
$$
\Omega_{|h|}:=\left\{x\in \Omega : dist(x, \partial\Omega)>|h|\right\}.
$$
Then
\begin{itemize}
\item[$(d1)$] $\tau_{h}f\in W^{1,p}(\Omega)$ and
$$
D_{i} (\tau_{h}f)=\tau_{h}(D_{i}f).
$$
\item[$(d2)$] If at least one of the functions $F$ or $G$ has support contained
in $\Omega_{|h|},$ then
$$
\int_{\Omega} f\, \tau_{h} g\, dx =-\int_{\Omega} g\, \tau_{-h}f\, dx.
$$
\item[$(d3)$] We have
$$
\tau_{h}(fg)(x)=f(x+h)\tau_{h}g(x)+g(x)\tau_{h}f(x).
$$
\end{itemize}
\end{proposition}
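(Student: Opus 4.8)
The plan is to prove the three statements by direct manipulation of the finite difference operator, proceeding from the purely algebraic identity to those that require a change of variables or a short approximation argument; throughout, the only bookkeeping needed is that every translate appearing must have its argument in $\Omega$, which is exactly what the restriction to $\Omega_{|h|}$ guarantees.

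Item $(d3)$ is an algebraic identity valid pointwise on $\Omega_{|h|}$ and uses no regularity at all: writing $\tau_{h}(fg)(x)$ from the definition of the difference operator and adding and subtracting the term $f(x+h)g(x)$ in the numerator, the expression splits as $f(x+h)\tau_{h}g(x)+g(x)\tau_{h}f(x)$. I would record this first, since $(d3)$ itself is the tool most used later.

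For $(d1)$ I would first treat $f\in C^{\infty}(\Omega)\cap W^{1,p}(\Omega)$, where the claim reduces to the elementary fact that translation commutes with classical differentiation, so that $D_{i}(\tau_{h}f)=\tau_{h}(D_{i}f)$ on $\Omega_{|h|}$; since $D_{i}f\in L^{p}$ and the translation operator maps $L^{p}$ into $L^{p}$ (after the obvious domain restriction) with norm independent of $h$, the right-hand side lies in $L^{p}$, whence $\tau_{h}f\in W^{1,p}$. For a general $f\in W^{1,p}(\Omega)$ I would pass to the limit along mollifications $f_{\varepsilon}\to f$ in $W^{1,p}_{\mathrm{loc}}$: one has $\tau_{h}f_{\varepsilon}\to\tau_{h}f$ and $\tau_{h}(D_{i}f_{\varepsilon})\to\tau_{h}(D_{i}f)$ in $L^{p}_{\mathrm{loc}}$, and since $D_{i}(\tau_{h}f_{\varepsilon})=\tau_{h}(D_{i}f_{\varepsilon})$ for each $\varepsilon$, the identity survives in the limit because the distributional gradient is continuous under $L^{p}_{\mathrm{loc}}$ convergence. (Alternatively, one checks the defining weak identity directly: for $\varphi\in C^{\infty}_{0}(\Omega_{|h|})$ the relevant translate of $\varphi$ is an admissible test function on $\Omega$, so moving the translation from $f$ onto $\varphi$ by a change of variables and then integrating $f$ by parts reproduces $-\int\tau_{h}(D_{i}f)\,\varphi$.)

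For $(d2)$ the mechanism is a change of variables combined with the support hypothesis. Expanding $\tau_{h}g$ from the definition, $\int_{\Omega}f\,\tau_{h}g$ is (up to the normalization built into $\tau_{h}$) a difference of $\int_{\Omega}f(x)g(x+h)\,dx$ and $\int_{\Omega}f(x)g(x)\,dx$; in the first integral I substitute $y=x+h$, and the assumption that one of the two functions, say $g$, is supported in $\Omega_{|h|}$ ensures that $g(y)\neq 0$ forces $y-h\in\Omega$, so $f(y-h)$ is meaningful and nothing is lost at the boundary. Collecting terms gives $\int_{\Omega}g(y)\,[f(y-h)-f(y)]\,dy$, which equals $-\int_{\Omega}g\,\tau_{-h}f$, the sign being precisely the one forced by the normalization of $\tau_{-h}$; the symmetric case in which $f$ has support in $\Omega_{|h|}$ is identical after relabeling. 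There is no genuine obstacle in any of this: the statements are standard, and the only mildly delicate points are keeping all translated quantities defined (handled by $\Omega_{|h|}$) and the routine density step in $(d1)$.
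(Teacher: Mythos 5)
Your three arguments are all correct and standard; there is, however, nothing in the paper to compare them against, because the paper does not prove this proposition at all: it is introduced with the sentence ``We start with the description of some elementary properties that can be found, for example, in \cite{Giusti}'' and is cited from Giusti's book. Given that, a brief assessment of your own reasoning: the pointwise add-and-subtract identity for $(d3)$ is exactly right and requires no regularity; for $(d1)$ both of your two routes (mollify, or verify the weak identity directly against $\varphi\in C^\infty_0(\Omega_{|h|})$ by shifting the translation onto the test function) are sound, and the second is arguably cleaner since it avoids introducing a mollification parameter; for $(d2)$ the change of variables $y=x+h$ together with the support hypothesis is precisely the point, and you correctly identify that the support condition is what keeps $f(y-h)$ well defined after the substitution. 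One small caveat: the hypothesis in the proposition is stated in terms of functions ``$F$ or $G$'' which are not otherwise introduced — an evident typo for $f$ and $g$ — and your reading of it as such is the intended one.
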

\noindent The next result about finite difference operator is a kind of integral version
of Lagrange Theorem.
\begin{lemma}\label{le1} If $0<\rho<R$, $|h|<\frac{R-\rho}{2}$, $1 < p <+\infty$,
 and $f, Df\in L^{p}(B_{R})$ then
$$
\int_{B_{\rho}} |\tau_{h} f(x)|^{p}\ dx\leq c(n,p)|h|^{p} \int_{B_{R}}
|D f(x)|^{p}\ dx .
$$
Moreover
$$
\int_{B_{\rho}} |f(x+h )|^{p}\ dx\leq  \int_{B_{R}}
|f(x)|^{p}\ dx .
$$
\end{lemma}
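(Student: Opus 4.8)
\textbf{Plan for Lemma \ref{le1}.}
The statement is the standard ``difference quotient is controlled by the gradient in $L^p$'' estimate, together with the trivial translation bound. I would prove both by a density argument reducing to smooth functions and then applying the fundamental theorem of calculus together with H\"older's inequality.

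First I would establish the first inequality for $f\in C^\infty(\overline{B_R})$. Fix $x\in B_\rho$ and $h\in\mathbb{R}^n$ with $|h|<\frac{R-\rho}{2}$; then the whole segment $\{x+th:\ t\in[0,1]\}$ lies in $B_{R}$ (indeed in $B_{(R+\rho)/2}$), so one may write
\begin{equation*}
\tau_h f(x)=f(x+h)-f(x)=\int_0^1 \frac{d}{dt}f(x+th)\,dt=\int_0^1 Df(x+th)\cdot h\,dt .
\end{equation*}
Taking absolute values, using $|Df(x+th)\cdot h|\le |h|\,|Df(x+th)|$ and then H\"older's inequality in the $t$-variable (or simply Jensen, since the measure $dt$ on $[0,1]$ is a probability measure) gives
\begin{equation*}
|\tau_h f(x)|^p\le |h|^p\int_0^1 |Df(x+th)|^p\,dt .
\end{equation*}
Now I would integrate over $x\in B_\rho$, use Fubini to exchange the order of integration, and for each fixed $t\in[0,1]$ perform the change of variables $y=x+th$, which maps $B_\rho$ into $B_{R}$ (again because $|th|\le|h|<\frac{R-\rho}{2}$) and has unit Jacobian; this yields
\begin{equation*}
\int_{B_\rho}|\tau_h f(x)|^p\,dx\le |h|^p\int_0^1\!\!\left(\int_{B_R}|Df(y)|^p\,dy\right)dt=|h|^p\int_{B_R}|Df(y)|^p\,dy ,
\end{equation*}
so in fact $c(n,p)=1$ suffices for smooth functions; the stated constant leaves room for the density step. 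To pass to general $f$ with $f,Df\in L^p(B_R)$, I would mollify: take $f_\varepsilon=f*\phi_\varepsilon$, which converges to $f$ in $W^{1,p}$ on a slightly smaller ball containing $B_R$ (shrinking $R$ by an arbitrarily small amount, or working on $B_{R'}$ with $\rho<R'<R$ and then letting $R'\uparrow R$), apply the inequality to each $f_\varepsilon$, and pass to the limit using $L^p$-convergence of both $f_\varepsilon$ and $Df_\varepsilon$. The second inequality is even simpler: for smooth $f$ the substitution $y=x+h$ maps $B_\rho$ into $B_R$ with unit Jacobian, so $\int_{B_\rho}|f(x+h)|^p\,dx=\int_{B_\rho+h}|f(y)|^p\,dy\le\int_{B_R}|f(y)|^p\,dy$, and the same density argument extends it to $f\in L^p(B_R)$.

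There is no real obstacle here; the only point requiring a little care is bookkeeping the radii so that every translated ball stays inside $B_R$ — this is exactly what the hypothesis $|h|<\frac{R-\rho}{2}$ guarantees (one has the chain $B_\rho \subset B_\rho+th \subset B_{(R+\rho)/2}\subset B_R$ for $t\in[0,1]$) — and handling the density step without shrinking the domain, which is dealt with by first proving the estimate on an intermediate ball $B_{R'}$ with $\rho<R'<R$ and letting $R'\to R$.
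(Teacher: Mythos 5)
The paper does not prove Lemma~\ref{le1} at all; it merely quotes it as one of the ``elementary properties that can be found, for example, in \cite{Giusti}.'' Your argument is the standard proof of exactly that reference result, and it is correct: for smooth $f$ write $\tau_h f(x)=\int_0^1 Df(x+th)\cdot h\,dt$, bound by $|h|^p\int_0^1|Df(x+th)|^p\,dt$ via Jensen, then Fubini plus the unit-Jacobian change of variables $y=x+th$ (all translates $B_\rho+th$ stay in $B_R$ because $|th|\le|h|<\frac{R-\rho}{2}$), and finally pass to general $f\in W^{1,p}(B_R)$ by density. The second inequality is a direct change of variables for any $f\in L^p(B_R)$ and needs no density step at all.

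One small slip worth fixing: the chain you write, $B_\rho\subset B_\rho+th\subset B_{(R+\rho)/2}\subset B_R$, is not quite what you mean --- $B_\rho$ need not be contained in its own translate $B_\rho+th$. The inclusion that the argument actually uses, and which you invoke correctly in the displayed computation, is simply $B_\rho+th\subset B_{\rho+|h|}\subset B_{(R+\rho)/2}\subset B_R$ for $t\in[0,1]$. Also, for the density step it is cleanest to say explicitly that, given $|h|<\frac{R-\rho}{2}$, one picks $R'$ with $\rho+2|h|<R'<R$, so that the hypothesis $|h|<\frac{R'-\rho}{2}$ holds on the intermediate ball; then $f_\varepsilon\to f$ in $W^{1,p}(B_{R'})$ since $B_{R'}\Subset B_R$, and the estimate passes to the limit with the right-hand side only increasing when $B_{R'}$ is enlarged to $B_R$. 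With these cosmetic repairs the proof is complete and, as you observe, even yields $c(n,p)=1$.
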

\noindent Now, we recall the fundamental Sobolev embedding property.
\begin{lemma}\label{lep} Let $f:\mathbb{R}^{n}\to\mathbb{R}^{N}$, $f\in
L^{p}(B_{R})$ with $1<p<n$. Suppose that there exist $\rho\in(0,R)$ and  $M>0$ such that
$$
\sum_{s=1}^{n}\int_{B_{\rho}}|\tau_{s,h}f(x)|^{p}\,dx\leq
M^{p} |h|^{p},
$$
for every $h$ with $|h|<\frac{R-\rho}{2}$. Then $$f\in
W^{1,p}(B_{\rho})\cap L^{\frac{np}{n-p}}(B_{\rho}).$$ Moreover
\begin{equation}\label{frac0}
	||Df||_{L^{p}(B_{\rho})}\leq
M
\end{equation}
and
\begin{equation}\label{frac}
	||f||_{L^{\frac{np}{n-p}}(B_{\rho})}\leq
c\left(M+||f||_{L^{p}(B_{R})}\right)
\end{equation}
with $c= c(n, N,p)$.
\end{lemma}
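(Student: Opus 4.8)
The plan is the classical two-step argument: first use the uniform bound on the difference quotients, together with the reflexivity of $L^p$, to show that $f$ has a weak gradient in $B_\rho$ satisfying \eqref{frac0}; then deduce \eqref{frac} from the Sobolev embedding theorem on $B_\rho$. Only property $(d2)$ of Proposition~\ref{findiffpr} and the Sobolev embedding theorem are needed.

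\textbf{Step 1 (existence of the weak gradient).} Fix $s\in\{1,\dots,n\}$ and a sequence $h_j\to 0$ with $0<|h_j|<\tfrac{R-\rho}{2}$, and set $D_{s,h_j}f:=\tau_{s,h_j}f/h_j\in L^p(B_\rho;\mathbb{R}^N)$. By hypothesis $\sum_{s=1}^n\int_{B_\rho}|D_{s,h_j}f|^p\,dx\le M^p$ for every $j$, so the $\mathbb{R}^{nN}$-valued map $(D_{1,h_j}f,\dots,D_{n,h_j}f)$ is bounded in $L^p(B_\rho)$ uniformly in $j$. Since $1<p<\infty$, $L^p(B_\rho)$ is reflexive, so along a subsequence (not relabelled) $D_{s,h_j}f\rightharpoonup g_s$ weakly in $L^p(B_\rho)$ for some $g_s\in L^p(B_\rho;\mathbb{R}^N)$, for every $s$ (extract directions one at a time). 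For $\varphi\in C_0^\infty(B_\rho)$ and $j$ large, so that $\mathrm{supp}\,\varphi$ lies at distance $>|h_j|$ from $\partial B_\rho$, the discrete integration-by-parts identity $(d2)$ of Proposition~\ref{findiffpr} expresses $\int_{B_\rho}(D_{s,h_j}f)\,\varphi\,dx$ as an integral of $f$ against a difference quotient of $\varphi$; since that difference quotient of $\varphi$ converges uniformly on $\overline{B_\rho}$ to a first-order partial derivative of $\varphi$, letting $j\to\infty$ yields $\int_{B_\rho}g_s\,\varphi\,dx=-\int_{B_\rho}f\,\partial_s\varphi\,dx$ for every $\varphi\in C_0^\infty(B_\rho)$. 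Hence $g_s=\partial_s f$ in the distributional sense and $f\in W^{1,p}(B_\rho)$. Finally, by weak lower semicontinuity of the $L^p$-norm and superadditivity of $\liminf$,
$$\sum_{s=1}^n\int_{B_\rho}|\partial_s f|^p\,dx\le\liminf_{j\to\infty}\sum_{s=1}^n\int_{B_\rho}|D_{s,h_j}f|^p\,dx\le M^p,$$
which is \eqref{frac0}.

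\textbf{Step 2 (higher integrability).} Since $f\in W^{1,p}(B_\rho)$ with $1<p<n$, the Sobolev embedding theorem on the ball $B_\rho$ gives $f\in L^{\frac{np}{n-p}}(B_\rho)$ together with
$$\|f\|_{L^{\frac{np}{n-p}}(B_\rho)}\le c\big(\|Df\|_{L^p(B_\rho)}+\|f\|_{L^p(B_\rho)}\big),\qquad c=c(n,N,p).$$
Combining with \eqref{frac0} and with $\|f\|_{L^p(B_\rho)}\le\|f\|_{L^p(B_R)}$ (valid since $\rho<R$) yields \eqref{frac}.

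The argument is essentially routine; the only points deserving attention are both in Step 1: one must exploit $p>1$ — the statement fails at $p=1$ — to extract a weakly convergent subsequence of the difference quotients, and one must keep careful track of the signs and of the correct translation direction in the discrete integration by parts, so as to identify the weak $L^p$-limit $g_s$ with $\partial_s f$ rather than with $-\partial_s f$.
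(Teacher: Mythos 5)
The paper does not prove this lemma; it cites \cite[Lemma 8.2]{Giusti}, and your argument (weak $L^p$-compactness of the difference quotients, discrete integration by parts to identify the weak limit with the distributional gradient, weak lower semicontinuity of the norm, then Sobolev embedding) is precisely the standard proof given there. The reasoning is correct, including the two points you flag at the end, so there is nothing to add beyond noting that the Sobolev embedding constant on $B_\rho$ in fact carries a dependence on $\rho$ that the statement's ``$c=c(n,N,p)$'' suppresses — but that imprecision is in the lemma as quoted, not in your proof.
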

For the proof see, for example, \cite[Lemma 8.2]{Giusti}.
\\
 
\medskip
\vskip2cm
\section{Proof of the a-priori estimate}
This section is devoted to the proof of an a-priori estimate, which is at the same time a first regularity result,  for local solution to the following equation
\begin{equation}\label{equa}
	\mathrm{div}(\mathcal{A}(x,Du))=b(x) \qquad\qquad \text{in}\,\,\,\Omega
\end{equation}
where $\Omega\subset \mathbb{R}^n$ is a bounded open set and the operator $\mathcal{A}: \Omega\times \mathbb{R}^{N\times n}\to \mathbb{R}$ is a Carath\'eodory map satisfying the following set of assumptions for a couple of exponents $1<p<q$  with $1<p\le 2$

\begin{equation}\label{(A2b)}
	\langle \mathcal{A}(x,\xi)-\mathcal{A}(x,\eta),\xi-\eta\rangle\ge \nu|\xi-\eta|^2(\mu^2+|\xi|^2+|\eta|^2)^{\frac{p-2}{2}}
\end{equation}

\begin{equation}\label{(A1b)}
	| \mathcal{A}(x,\xi)-\mathcal{A}(x,\eta)|\le L|\xi-\eta|\left[(1+|\xi|^2+|\eta|^2)^{\frac{p+q-4}{4}}+(\mu^2+|\xi|^2+|\eta|^2)^{\frac{p-2}{2}}
\right]
\end{equation}

\begin{equation}\label{(A3b)}
	| \mathcal{A}(x,\xi)-\mathcal{A}(y,\xi)|\le |x-y|(k(x)+k(y))(1+|\xi|^2)^{\frac{p+q-2}{4}}
\end{equation}
where $c_1,c_2,L,\nu>0$ are fixed constants, $\mu\in [0,1]$ is a parameter, $k\in L^r$ with $r>n$ is a non negative function.
\\
We define  a local solution to \eqref{equa} as a function $u\in W^{1,q}_{\mathrm{loc}}(\Omega)$  that satisfies the following integral identity
\begin{eqnarray}\label{locale}
\int_{\Omega}\langle \mathcal{A}(x,Du), D\varphi \rangle\,dx=\int_{\Omega} b(x)\varphi(x)\,dx\qquad \text{for all}\,\,\varphi\in C^{\infty}_0(\Omega).	
\end{eqnarray}
Our next aim is to prove the following

\begin{theorem}\label{apriori}
Let $u\in W^{1,q}_{\mathrm{loc}}(\Omega)$ be a local solution to \eqref{equa} under the assumptions \eqref{(A2b)}--\eqref{(A3b)} such that 
\begin{equation}\label{ipoapriori}
Du\in L^{\frac{2^*p}{2}}_{\mathrm{loc}}(\Omega).	
\end{equation} 
Assume moreover that
\begin{equation}\label{gap}
	\frac{q}{p}<1+\min\left\{2\left(\frac{1}{n}-\frac{1}{r}\right),\, \frac{4(p-1)}{p(n-2)}\right\}\qquad \text{if}\,\, n>2
\end{equation}
or
\begin{equation}\label{gapn=2}
	\frac{q}{p}<1+2\left(\frac{1}{n}-\frac{1}{r}\right),\qquad\qquad\qquad\qquad\qquad \text{if}\,\, n=2.
\end{equation}
Then  $$u\in W^{2,p}_{\mathrm{loc}}(\Omega),\qquad V_p(Du)\in W^{1,2}_{\mathrm{loc}}(\Omega)$$ and the following estimates 
\begin{eqnarray}\label{dersec2}
	\int_{B_{\frac{R}{2}}}|D(V_p(D u))|^2\,dx\le  c(1+||k||_{L^r(B_R)}+ ||b||_{L^{p'}(B_R)})^{\gamma}\left(1+\int_{B_R}  (1+|Du|)^{p}\,dx\right)^{\gamma},	
\end{eqnarray}
and
\begin{eqnarray}\label{dersec1}
\int_{B_{\frac{R}{2}}}|D^2 u|^p\,dx\le c(1+||k||_{L^r(B_R)}+ ||b||_{L^{p'}(B_R)})^{\gamma}\left(1+\int_{B_R}  (1+|Du|)^{p}\,dx\right)^{\gamma}	
\end{eqnarray}
hold for every ball $B_R\Subset\Omega$, for and exponent $\gamma=\gamma(p,q,n,r)$,  and  $c=c(\nu,L,n,p,q,r,R)$.
\end{theorem}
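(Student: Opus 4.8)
The plan is to use the difference-quotient method applied to the equation \eqref{equa}. First I would fix a ball $B_R\Subset\Omega$, radii $\frac{R}{2}\le s<t\le R$, and a cut-off $\eta\in C^\infty_0(B_t)$ with $\eta\equiv 1$ on $B_s$, $0\le\eta\le 1$, $|D\eta|\le \frac{c}{t-s}$. For $|h|$ small, I would insert the test function $\varphi=\tau_{-h}(\eta^2\tau_h u)$ into the weak formulation \eqref{locale}. After the standard manipulations using $(d1)$–$(d3)$ of Proposition~\ref{findiffpr} and discrete integration by parts, this yields the identity
\begin{equation*}
\int_\Omega \eta^2\langle \tau_h(\mathcal{A}(x,Du)),\tau_h(Du)\rangle\,dx = -2\int_\Omega \eta\,\tau_h u\,\langle\tau_h(\mathcal{A}(x,Du)),D\eta\rangle\,dx+\int_\Omega b\,\tau_{-h}(\eta^2\tau_h u)\,dx.
\end{equation*}
On the left I would split $\tau_h(\mathcal{A}(x,Du))=[\mathcal{A}(x+h,Du(x+h))-\mathcal{A}(x+h,Du(x))]+[\mathcal{A}(x+h,Du(x))-\mathcal{A}(x,Du(x))]$, use ellipticity \eqref{(A2b)} on the first bracket together with Lemma~\ref{Vi} to get a term controlling $\int\eta^2|\tau_h(V_p(Du))|^2$, and move the $x$-oscillation bracket, bounded via \eqref{(A3b)}, to the right-hand side.

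Next I would estimate each right-hand side term. The good term on the left is $\Phi_h:=\int\eta^2\frac{|\tau_h(V_p(Du))|^2}{|h|^2}\,dx$ (after dividing by $|h|^2$). The terms to bound are: (i) the $D\eta$ term, where I use \eqref{(A1b)} to bound $|\tau_h\mathcal{A}(x,Du)|$ by $|h|$ times weighted gradient factors times $|\tau_h(Du)|/|h|$ — here the subquadratic structure is delicate and I expect to use Young's inequality to absorb a small multiple of $\Phi_h$ into the left, at the cost of a term like $(t-s)^{-2}\int(\mu^2+|Du(x)|^2+|Du(x+h)|^2)^{\frac{q-2}{2}}|\tau_h u|^2/|h|^2$-type quantities plus, from the $(1+\dots)^{\frac{p+q-4}{4}}$ factor in \eqref{(A1b)}, genuinely $q$-dependent integrals; (ii) the $x$-dependence term, controlled by \eqref{(A3b)} and \eqref{assob}-type bounds, producing $\int \eta^2 (k(x)+k(x+h))(1+|Du|)^{\frac{p+q-2}{4}}\cdot\frac{|\tau_h Du|}{|h|}\,dx$, to which I again apply Young to absorb $\varepsilon\Phi_h$ and leave a term involving $\|k\|_{L^r}$ and a power of $\int(1+|Du|)^{?}$; (iii) the $b$ term, handled by Lemma~\ref{le1}, H\"older with exponents $p',p$, and Young, absorbing $\varepsilon\int\eta^2|\tau_h(Du)/h|^p$ — but since $p<2$ this $L^p$-norm of the difference quotient of $Du$ is itself not yet controlled, so I must instead route through $|\tau_h Du|^p\le c\,|\tau_h(V_p(Du))|^p(\mu^2+|Du(x)|^2+|Du(x+h)|^2)^{\frac{(2-p)p}{4}}$ via Lemma~\ref{Vi}, then H\"older to split off $\Phi_h^{p/2}$ and a power of $\int(1+|Du|)^p$, and finally Young again.

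The key structural point — and the main obstacle — is that all the ``remainder'' integrals produced above involve $(1+|Du|)$ raised to an exponent strictly larger than $p$ (roughly $\frac{p(q-1)}{p-1}$ or similar, coming from combining the $q$-growth factors with the subquadratic weights through H\"older), whereas the final estimate only allows $\int_{B_R}(1+|Du|)^p$ on the right. To close the argument I would interpolate: using the hypothesis \eqref{ipoapriori} that $Du\in L^{\frac{2^*p}{2}}_{\loc}$, hence $V_p(Du)\in L^{2^*}_{\loc}$ by (the reverse direction implicit in) Lemma~\ref{rem}, and the embedding Lemma~\ref{lep} applied to $\eta\,V_p(Du)$, I can bound the higher-power gradient integrals by $\Phi^{\theta}$ (where $\Phi$ is the limiting $\int|D(\eta V_p(Du))|^2$) times lower-order terms, with an interpolation exponent $\theta<1$ precisely when the gap conditions \eqref{gap}/\eqref{gapn=2} hold — this is exactly where $\frac{2(p-1)}{p(n-2)}$ and $\frac1n-\frac1r$ enter, the former from the Sobolev exponent $2^*$ in Lemma~\ref{rem} and the latter from $k\in L^r$. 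After letting $h\to 0$ (using Lemma~\ref{lep} to conclude $V_p(Du)\in W^{1,2}_{\loc}$ with $\int_{B_s}|D(V_p(Du))|^2\le \liminf \Phi_h$), applying Young's inequality to decouple $\Phi^\theta$ from $\Phi$, and then absorbing the $\vartheta\,h(t)$-type term via the iteration Lemma~\ref{iter}, I obtain \eqref{dersec2}. Finally \eqref{dersec1} follows from \eqref{dersec2} by Lemma~\ref{VD} together with H\"older. I expect the bookkeeping of exponents in the interpolation step, and verifying that the two gap conditions are exactly what makes $\theta<1$, to be the technically hardest part; the difference-quotient manipulations themselves are routine.
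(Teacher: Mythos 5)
Your proposal follows essentially the same route as the paper's proof: test function $\tau_{-h}(\eta^2\tau_h u)$, ellipticity together with the $V_p$-equivalence of Lemma~\ref{Vi} to extract $\int\eta^2|\tau_hV_p(Du)|^2/|h|^2$, the subquadratic H\"older trick (as in \eqref{stima3}) to convert $L^p$ difference quotients of $Du$ into the $V_p$-quantity, interpolation of the high-power gradient integrals between $L^p$ and $L^{2^*p/2}$ justified by the a-priori assumption \eqref{ipoapriori} and Lemmas~\ref{rem}, \ref{lep}, and then Young's inequality plus the iteration Lemma~\ref{iter} to close, with Lemma~\ref{VD} delivering \eqref{dersec1} at the end. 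One small slip worth flagging: the constants $\frac{2(p-1)}{p(n-2)}$ and $\frac1n-\frac1r$ you cite are those appearing in \eqref{gapa} for Theorem~\ref{main1}, whereas the a-priori estimate here requires the twice-as-large bounds $2\bigl(\frac1n-\frac1r\bigr)$ and $\frac{4(p-1)}{p(n-2)}$ of \eqref{gap}, which is exactly what falls out when you compute the interpolation exponents $\vartheta_1,\vartheta_2$ for the integrals with exponents $\frac{qr}{r-2}$ and $\frac{p'(p+q-2)}{2}$ between $L^p$ and $L^{2^*p/2}$.
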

\begin{proof}
	Let $B_R\Subset\Omega$ be a ball, fix radii $0<\frac{R}{2}<s<t<R$ and a cut off function $\eta\in C^\infty_0(B_t)$ such that $\eta\equiv 1$ on $B_s$ and $|D\eta|\le \frac{C}{t-s}$.
	We  choose $\varphi=\tau_{-h}(\eta^2\tau_h u)\in W^{1,q}_{\mathrm{loc}}(\Omega)$ as  test function in 
	\eqref{locale}
	to deduce that
	$$\int_\Omega \langle \mathcal{A}(x,Du),\tau_{-h}(D(\eta^2\tau_hu))\rangle\,dx= \int_\Omega  b\tau_{-h}(\eta^2\tau_h u)\,dx$$
	i.e.
	$$\int_\Omega \langle \mathcal{A}(x,Du),\tau_{-h}(\eta^2\tau_hDu)+2\tau_{-h}(\eta D\eta \tau_h u)\rangle\,dx= \int_\Omega  b\tau_{-h}(\eta^2\tau_h u)\,dx.$$
	From previous equality, we get
	\begin{eqnarray*}
	\int_\Omega \langle \mathcal{A}(x,Du),\tau_{-h}(\eta^2\tau_hDu)\rangle\,dx&=&-2\int_\Omega \langle \mathcal{A}(x,Du),\tau_{-h}(\eta D\eta \tau_h u)\rangle\,dx\cr\cr
	&&\qquad+ \int_\Omega  b\tau_{-h}(\eta^2\tau_h u)\,dx.
	\end{eqnarray*}
	which, by virtue of (d2) in Proposition \ref{findiffpr}, implies
	\begin{eqnarray*}
	\int_\Omega \eta^2 \langle\tau_h( \mathcal{A}(x,Du)),\tau_hDu\rangle\,dx&=&-2\int_\Omega \langle \mathcal{A}(x,Du),\tau_{-h}(\eta D\eta \tau_h u)\rangle\,dx\cr\cr
	&&\qquad+ \int_\Omega  b\tau_{-h}(\eta^2\tau_h u)\,dx.
	\end{eqnarray*}
	Exploiting the definition of $\tau_h( \mathcal{A}(x,Du))$ in   previous equality, we have
	\begin{eqnarray*}
	&&\int_\Omega \eta^2 \langle  \mathcal{A}(x+h,Du(x+h))- \mathcal{A}(x+h,Du(x)),\tau_hDu\rangle\,dx\cr\cr
	&=&-\int_\Omega \eta^2 \langle  \mathcal{A}(x+h,Du(x))- \mathcal{A}(x,Du(x)),\tau_hDu\rangle\,dx\cr\cr	
	&&\qquad-2\int_\Omega \langle \mathcal{A}(x,Du),\tau_{-h}(\eta D\eta \tau_h u)\rangle\,dx\cr\cr
	&&\qquad+ \int_\Omega  b\tau_{-h}(\eta^2\tau_h u)\,dx.
	\end{eqnarray*}
Using the ellipticity assumption \eqref{(A2b)} in the left hand side and  assumptions \eqref{(A1b)} and \eqref{(A3b)}  in the right hand side of previous estimate, we obtain
\begin{eqnarray*}
	&&\nu\int_\Omega \eta^2(\mu^2+|Du(x)|^2+ |Du(x+h)|^2)^{\frac{p-2}{2}}|\tau_hDu|^2\,dx\cr\cr
	&\le& \int_\Omega\eta^2 | \mathcal{A}(x+h,Du(x))- \mathcal{A}(x,Du(x))|| \tau_h Du|\,dx\cr\cr
	&&+\quad 2\int_\Omega \eta | \mathcal{A}(x,Du(x))||\tau_{-h}(\eta D\eta \tau_h u)|\,dx\cr\cr
	&&\qquad+ \int_\Omega  |b||\tau_{-h}(\eta^2\tau_h u)|\,dx\cr\cr
	&\le& |h| \int_\Omega \eta^2 (k(x+h)+ k(x))(1+|Du(x)|^2)^{\frac{p+q-2}{4}}|\tau_h Du|\,dx\cr\cr
	&&+ c\int_\Omega (1+|Du(x)|)^{\frac{p+q-2}{2}}|\tau_{-h}(\eta D\eta \tau_h u)|\,dx\cr\cr
	&&\qquad+ \int_\Omega  |b||\tau_{-h}(\eta^2\tau_h u)|\,dx
	\end{eqnarray*}
	By the use of Young's inequality in the first integral of  the right hand side we get
	\begin{eqnarray*}
	&&\nu\int_\Omega \eta^2(\mu^2+|Du(x)|^2+ |Du(x+h)|^2)^{\frac{p-2}{2}}|\tau_hDu|^2\,dx\cr\cr
	&\le& \frac{\nu}{2}\int_\Omega \eta^2(\mu^2+|Du(x)|^2+ |Du(x+h)|^2)^{\frac{p-2}{2}}|\tau_hDu|^2\,dx \cr\cr
	&&\quad +c(\nu)|h|^2 \int_\Omega \eta^2 (k(x+h)+ k(x))^2(1+|Du(x)|^2+ |Du(x+h)|^2)^{\frac{q}{2}}\,dx\cr\cr
	&&+ c\int_\Omega (1+|Du(x)|)^{\frac{p+q-2}{2}}|\tau_{-h}(\eta D\eta \tau_h u)|\,dx\cr\cr
	&&\qquad+ \int_\Omega  |b||\tau_{-h}(\eta^2\tau_h u)|\,dx
	\end{eqnarray*}
	Reabsorbing the first integral in the right hand side,
	we get
	\begin{eqnarray}\label{stima}
	&&\int_\Omega \eta^2(\mu^2+|Du(x)|^2+ |Du(x+h)|^2)^{\frac{p-2}{2}}|\tau_hDu|^2\,dx\cr\cr
	&\le& c|h|^2 \int_\Omega \eta^2 (k(x+h)+ k(x))^2(1+|Du(x)|+ |Du(x+h)|)^{q}\,dx\cr\cr
	&&+ c\int_\Omega (1+|Du(x)|)^{\frac{p+q-2}{2}}|\tau_{-h}(\eta D\eta \tau_h u)|\,dx\cr\cr
	&&\qquad+ \int_\Omega  |b||\tau_{-h}(\eta^2\tau_h u)|\,dx
	=: I_1+I_2+I_3
	\end{eqnarray}
	The assumption $k\in L^r$ with $r>n$ and H\"older's inequality yields
	\begin{eqnarray}\label{I1}
	I_1 &\le&	c|h|^2\left(\int_{B_R}  k(x)^r\,dx\right)^{\frac{2}{r}}\left(\int_{B_t} \eta^2 (1+|Du(x)|)^{\frac{rq}{r-2}}\,dx\right)^{\frac{r-2}{r}}\cr\cr
	&=:& c|h|^2||k||_{L^r(B_R)}^2\left(\int_{B_t} \eta^2 (1+|Du(x)|)^{\frac{rq}{r-2}}\,dx\right)^{\frac{r-2}{r}}.
	\end{eqnarray}
	The a-priori assumption $Du\in L^{\frac{np}{n-2}}_{\mathrm{loc}}(\Omega)$ and  again H\"older's inequality imply
	\begin{eqnarray*}
	I_2+I_3
		&\le& c\left(\int_{B_t} (1+|Du(x)|)^{\frac{p}{p-1}\frac{p+q-2}{2}}\,dx\right)^{\frac{p-1}{p}}\left(\int_{B_t}|\tau_{-h}(\eta\nabla\eta \tau_h u)|^p\,dx\right)^{\frac{1}{p}}\cr\cr
	&&\qquad+ c\left(\int_{B_t} |b(x)|^{\frac{p}{p-1}}\,dx\right)^{\frac{p-1}{p}}\left(\int_{B_t}|\tau_{-h}(\eta^2 \tau_h u)|^p\,dx\right)^{\frac{1}{p}}\cr\cr
	&=& c\left[\left(\int_{B_t}(1+|Du(x)|)^{\frac{p}{p-1}\frac{p+q-2}{2}}\,dx\right)^{\frac{p-1}{p}}+ \left(\int_{B_t} |b(x)|^{\frac{p}{p-1}}\,dx\right)^{\frac{p-1}{p}}\right]\cr\cr
	&&\quad\cdot\left[\left(\int_{B_t}|\tau_{-h}(\eta D\eta \tau_h u)|^p\,dx\right)^{\frac{1}{p}}+\left(\int_{B_t}|\tau_{-h}(\eta^2 \tau_h u)|^p\,dx\right)^{\frac{1}{p}}\right]\cr\cr
	&\le& c|h|\left[\left(\int_{B_t} (1+|Du(x)|)^{\frac{p}{p-1}\frac{p+q-2}{2}}\,dx\right)^{\frac{p-1}{p}}+ \left(\int_{B_R} |b(x)|^{\frac{p}{p-1}}\,dx\right)^{\frac{p-1}{p}}\right]\cr\cr
	&&\qquad\cdot\left[\left(\int_{B_t}|D(\eta D\eta \tau_h u)|^p\,dx\right)^{\frac{1}{p}}+\left(\int_{B_t}|D(\eta^2 \tau_h u)|^p\,dx\right)^{\frac{1}{p}}\right],
	\end{eqnarray*}
	where, in the last inequality, we used the first statement of Lemma \ref{le1}. Performing the calculations in the last two integrals, using the properties of $\eta$  and assuming without loss of generality that $0<t-s<R<1$,
	we get
	\begin{eqnarray}\label{I_2-I_3}
	I_2+I_3
	 &\le& c|h|\left[\left(\int_{B_t} (1+|Du(x)|)^{\frac{p}{p-1}\frac{p+q-2}{2}}\,dx\right)^{\frac{p-1}{p}}+||b||_{L^{\frac{p}{p-1}}(B_R)}\right]\cr\cr
	 &&\qquad \cdot\left[\frac{c}{(t-s)^2}\left(\int_{B_t}|\tau_h u|^p\,dx\right)^{\frac{1}{p}}+\frac{c}{(t-s)}\left(\int_{B_t} \eta^p|\tau_h Du|^p\,dx\right)^{\frac{1}{p}}\right]\cr\cr
	 &=& \frac{c|h|^2}{(t-s)^2}\left[\left(\int_{B_t} (1+|Du(x)|)^{\frac{p}{p-1}\frac{p+q-2}{2}}\,dx\right)^{\frac{p-1}{p}}+ ||b||_{L^{\frac{p}{p-1}}(B_R)}\right]\cr\cr
	 &&\qquad\qquad \cdot\left(\int_{B_t}|D u|^p\,dx\right)^{\frac{1}{p}}\cr\cr
	 &&+\frac{c|h|}{t-s}\left[\left(\int_{B_t} (1+|Du(x)|)^{\frac{p}{p-1}\frac{p+q-2}{2}}\,dx\right)^{\frac{p-1}{p}}+ ||b||_{L^{\frac{p}{p-1}}(B_R)}\right]\cr\cr
	 &&\qquad\qquad\cdot\left(\int_{B_t} \eta^p|\tau_hD u|^p\,dx\right)^{\frac{1}{p}}.
	\end{eqnarray}
	Inserting \eqref{I1} and \eqref{I_2-I_3} in \eqref{stima} and using the notation $p'=\frac{p}{p-1}$, we obtain
\begin{eqnarray}\label{stima2}
	&&\int_{B_t} \eta^2(\mu^2+|Du(x)|^2+ |Du(x+h)|^2)^{\frac{p-2}{2}}|\tau_hDu|^2\,dx\cr\cr
	&\le& c|h|^2||k||_{L^{r}(B_R)}^2\left(\int_{B_t} \eta^2 (1+|Du(x)|)^{\frac{rq}{r-2}}\,dx\right)^{\frac{r-2}{r}}\cr\cr
	&+&\frac{c|h|^2}{(t-s)^2}\left[\left(\int_{B_t} (1+|Du(x)|)^{\frac{p'(p+q-2)}{2}}\,dx\right)^{\frac{1}{p'}}\!\!+\! ||b||_{L^{p'}(B_R)}\right]\left(\int_{B_t}|D u|^p\,dx\right)^{\frac{1}{p}}\cr\cr
	 &+&\frac{c|h|}{t-s}\left[\left(\int_{B_t} (1+|Du(x)|)^{\frac{p'(p+q-2)}{2}}\,dx\right)^{\frac{1}{p'}}\!\!\!\!+\! ||b||_{L^{p'}(B_R)}\right]\!\!\left(\int_{B_t} \eta^p|\tau_hD u|^p\,dx\right)^{\frac{1}{p}}
	\end{eqnarray}
	
	By the assumption $1<p<2$, we have that
	\begin{eqnarray}\label{stima3}
	&&\left(\int_{B_t}\eta^p|\tau_hD u|^p\,dx\right)^{\frac{1}{p}}\cr\cr
	&=&\left(\int_{B_t}\eta^p|\tau_hD u|^p(\mu^2+|Du(x)|^2+ |Du(x+h)|^2)^{\frac{p(p-2)}{4}}(\mu^2+|Du(x)|^2+ |Du(x+h)|^2)^{\frac{p(2-p)}{4}}\,dx\right)^{\frac{1}{p}}\cr\cr
	&\le &\left(\int_{B_t}\eta^2 |\tau_hD u|^2(\mu^2+|Du(x)|^2+ |Du(x+h)|^2)^{\frac{p-2}{2}}\right)^{\frac{1}{2}}\left(\int_{B_R}(1+|Du|^2)^{\frac{p}{2}}\,dx\right)^{\frac{2-p}{2p}},
	\end{eqnarray}
	and, using \eqref{stima3} to estimate the last integral in the right hand side of \eqref{stima2}, we arrive at
	\begin{eqnarray}\label{sti}
	&&\nu\int_{B_t} \eta^2(\mu^2+|Du(x)|^2+ |Du(x+h)|^2)^{\frac{p-2}{2}}|\tau_hDu|^2\,dx\cr\cr
	&\le& c|h|^2||k||_{L^{r}(B_R)}^2\left(\int_{B_t} \eta^2 (1+|Du(x)|)^{\frac{qr}{r-2}}\,dx\right)^{\frac{r-2}{r}}\cr\cr
	&&+\frac{c|h|^2}{(t-s)^2}\left[\left(\int_{B_t} (1+|Du(x)|)^{\frac{p'(p+q-2)}{2}}\,dx\right)^{\frac{1}{p'}}+ ||b||_{L^{p'}(B_R)}\right]\cr\cr
	 &&\qquad\qquad \cdot\left(\int_{B_t}(1+|D u|)^p\,dx\right)^{\frac{1}{p}}\cr\cr
	 &&+\frac{c|h|}{(t-s)}\left[\left(\int_{B_t} (1+|Du(x)|)^{\frac{p'(p+q-2)}{2}}\,dx\right)^{\frac{p-1}{p}}+ ||b||_{L^{p'}(B_R)}\right]\cr\cr
	 &&\qquad\left(\int_{B_t}\eta^2 |\tau_hD u|^2(\mu^2+|Du(x)|^2+ |Du(x+h)|^2)^{\frac{p-2}{2}}\right)^{\frac{1}{2}}\cr\cr
	 &&\qquad\cdot\left(\int_{B_R}(1+|Du|^2)^{\frac{p}{2}}\,dx\right)^{\frac{2-p}{2p}}
	 \end{eqnarray}
By the use of Young's inequality, we get	
\begin{eqnarray}
	&&\nu\int_{B_t} \eta^2(\mu^2+|Du(x)|^2+ |Du(x+h)|^2)^{\frac{p-2}{2}}|\tau_hDu|^2\,dx\cr\cr
	&\le& c|h|^2||k||^2_{L^{r}(B_R)}\left(\int_{B_t} \eta^2 (1+|Du(x)|)^{\frac{qr}{r-2}}\,dx\right)^{\frac{r-2}{r}}\cr\cr
	&&+\frac{c|h|^2}{(t-s)^2}\left[\left(\int_{B_t} (1+|Du(x)|)^{\frac{p'(p+q-2)}{2}}\,dx\right)^{\frac{1}{p'}}+ ||b||_{L^{p'}(B_R)}\right]\left(\int_{B_R}(1+|D u|)^p\,dx\right)^{\frac{1}{p}}\cr\cr
	 &&+\frac{c|h|^2}{(t-s)^2}\left[\left(\int_{B_t} (1+|Du(x)|)^{\frac{p'(p+q-2)}{2}}\,dx\right)^{\frac{1}{p'}}+ ||b||_{L^{p'}(B_R)}\right]^2\left(\int_{B_R}(1+|Du|^2)^{\frac{p}{2}}\,dx\right)^{\frac{2-p}{p}}\cr\cr
	 &&\qquad\quad +\frac{\nu}{2}\left(\int_{B_t}\eta^2 |\tau_hD u|^2(\mu^2+|Du(x)|^2+ |Du(x+h)|^2)^{\frac{p-2}{2}}\right)
	 \end{eqnarray} 
	Reabsorbing the last integral in the right hand side, we get
	\begin{eqnarray*}
	&&\frac{\nu}{2}\int_{B_t} \eta^2(\mu^2+|Du(x)|^2+ |Du(x+h)|^2)^{\frac{p-2}{2}}|\tau_hDu|^2\,dx\cr\cr
	&\le& c|h|^2||k||_{L^{r}(B_R)}^2\left(\int_{B_t} \eta^2 (1+|Du(x)|)^{\frac{rq}{r-2}}\,dx\right)^{\frac{r-2}{r}}\cr\cr
	&&+\frac{c|h|^2}{(t-s)^2}\left[\left(\int_{B_t} (1+|Du(x)|)^{\frac{p'(p+q-2)}{2}}\,dx\right)^{\frac{1}{p'}}+ ||b||_{L^{p'}(B_R)}\right]\left(\int_{B_R}(1+|D u|)^p\,dx\right)^{\frac{1}{p}}\cr\cr
	 &&+\frac{c|h|^2}{(t-s)^2}\left[\left(\int_{B_t} (1+|Du(x)|)^{\frac{p'(p+q-2)}{2}}\,dx\right)^{\frac{1}{p'}}+ ||b||_{L^{p'}(B_R)}\right]^2\left(\int_{B_R}(1+|Du|)^{p}\,dx\right)^{\frac{2-p}{p}}.
	 \end{eqnarray*}
	 By Young's inequality in the second term in the right hand side of previous estimate, we have
	 \begin{eqnarray*}
	&&\frac{\nu}{2}\int_{B_t} \eta^2(\mu^2+|Du(x)|^2+ |Du(x+h)|^2)^{\frac{p-2}{2}}|\tau_hDu|^2\,dx\cr\cr
	&\le& c|h|^2||k||_{L^{r}(B_R)}^2\left(\int_{B_t} \eta^2 (1+|Du(x)|)^{\frac{rq}{r-2}}\,dx\right)^{\frac{r-2}{r}}+\frac{c|h|^2}{(t-s)^2}\left(\int_{B_R}(1+|D u|)^p\,dx\right)^{\frac{2}{p}}\cr\cr
	 &&+\frac{c|h|^2}{(t-s)^2}\left[\left(\int_{B_t} (1+|Du(x)|)^{\frac{p'(p+q-2)}{2}}\,dx\right)^{\frac{1}{p'}}+ ||b||_{L^{p'}(B_R)}\right]^2\cr\cr
	&&\qquad\qquad\cdot\left[1+\left(\int_{B_R}(1+|Du|^2)^{\frac{p}{2}}\,dx\right)^{\frac{2-p}{p}}
\right].
	 \end{eqnarray*}
	 So, by the elementary inequality $(a+b)^2\le c(a^2+b^2)$, we get
	 \begin{eqnarray*}
	&&\frac{\nu}{2}\int_{B_t} \eta^2(\mu^2+|Du(x)|^2+ |Du(x+h)|^2)^{\frac{p-2}{2}}|\tau_hDu|^2\,dx\cr\cr
	&\le& c|h|^2||k||_{L^{r}(B_R)}^2\left(\int_{B_t} \eta^2 (1+|Du(x)|)^{\frac{rq}{r-2}}\,dx\right)^{\frac{r-2}{r}}\cr\cr
	&&+\frac{c|h|^2}{(t-s)^2}\left(\int_{B_t} (1+|Du(x)|)^{\frac{p'(p+q-2)}{2}}\,dx\right)^{\frac{2}{p'}}\cr\cr
	&&\qquad\qquad\cdot\left[1+\left(\int_{B_R}(1+|Du|^2)^{\frac{p}{2}}\,dx\right)^{\frac{2-p}{p}}
\right]\cr\cr
	 &&+\frac{c|h|^2}{(t-s)^2}||b||_{L^{p'}(B_R)}^2\left[1+\left(\int_{B_R}(1+|Du|)^{p}\,dx\right)^{\frac{2-p}{p}}
\right]\cr\cr
&&+\frac{c|h|^2}{(t-s)^2}\left(\int_{B_R}(1+|D u|)^p\,dx\right)^{\frac{2}{p}}
	 \end{eqnarray*}

	Dividing both sides of previous estimate by $|h|^2$ and using Lemma \ref{Vi}, we have
	\begin{eqnarray}\label{stimadiff0}
	&&\int_{B_t} \eta^2\frac{|\tau_hV_p(Du)|^2}{|h|^2}\,dx\cr\cr
	&\le& c||k||_{L^{p'}(B_R)}^2\left(\int_{B_t} \eta^2 (1+|Du(x)|)^{\frac{rq}{r-2}}\,dx\right)^{\frac{r-2}{r}}\cr\cr
	&&+\frac{c}{(t-s)^2}\left(\int_{B_t} (1+|Du(x)|)^{\frac{p'(p+q-2)}{2}}\,dx\right)^{\frac{2}{p'}}\cr\cr
	&&\qquad\qquad\cdot\left[1+\left(\int_{B_R}(1+|Du|^2)^{\frac{p}{2}}\,dx\right)^{\frac{2-p}{p}}
\right]\cr\cr
	 &&+\frac{c}{(t-s)^2}||b||_{L^{p'}(B_R)}^2\left[1+\left(\int_{B_R}(1+|Du|)^{p}\,dx\right)^{\frac{2-p}{p}}
\right]\cr\cr
&&+\frac{c}{(t-s)^2}\left(\int_{B_R}(1+|D u|)^p\,dx\right)^{\frac{2}{p}}
	 \end{eqnarray}
Note the right hand side of \eqref{stimadiff0} is finite by virtue of the a-priori assumption $Du\in L^{\frac{2^*}{2}p}_{\mathrm{loc}}(\Omega)$ and the bound at \eqref{gap}. Indeed, the bound on the gap at \eqref{gap}, implies that
 \begin{eqnarray}\label{bound}
 \begin{cases}p<\frac{qr}{r-2}<\frac{2^*}{2}p
 \\
 \text{and}
 \\
 p<\frac{p}{p-1}\frac{p+q-2}{2}<\frac{2^*}{2}p
 \end{cases}	
 \end{eqnarray}
 {Indeed for $n>2$ we have $\frac{2^*}{2}=\frac{n}{n-2}$ and
 $$ \frac{qr}{r-2}<\frac{np}{n-2}\,\,\Longleftrightarrow\,\, \frac{q}{p}<\frac{r-2}{r}\frac{n}{n-2}=1+\frac{2}{n-2}-\frac{2n}{r(n-2)}$$
 and
 $$\frac{2}{n-2}-\frac{2n}{r(n-2)}=\frac{2(r-n)}{r(n-2)}>\frac{1}{n}-\frac{1}{r}\,\,\Longleftrightarrow\,\, \frac{2}{n-2}>\frac{1}{n}.$$
 Moreover
 $$\frac{p}{p-1}\frac{p+q-2}{2}<\frac{np}{n-2} \,\,\Longleftrightarrow\,\, p+q-2<\frac{2n(p-1)}{n-2}$$
 i.e.
 $$q<\frac{2n(p-1)}{n-2}-p+2=p+\frac{2n(p-1)}{n-2}-2(p-1)=p+2(p-1)\left[\frac{n}{n-2}-1\right] $$
 and so
 $$\frac{q}{p}<1+\frac{4n(p-1)}{p(n-2)} $$
 On the other hand for $n=2$, since $\frac{2^*}{2}$ is any exponent larger than $2$ inequalities \eqref{bound} are trivially satisfied choosing $2^*$ sufficiently large.}

	Therefore, estimate \eqref{stimadiff0}, by the  use of  Lemma \ref{lep} with $p=2$, yields
	 $$V_p(Du)\in W^{1,2}(B_s)\cap L^{\frac{2^*}{2}}(B_s).$$
	 Moreover by  \eqref{stinorma} in Lemma \ref{rem} and \eqref{frac}, we get
	\begin{eqnarray}\label{stima4}
	&&\left(\int_{B_s} |Du|^{\frac{2^*}{2}p}\,dx\right)^{\frac{2}{2^*}}\le c(n)\left(\int_{B_s}\left( 1+|V_p(Du)|^{2^*}\right)\,dx\right)^{\frac{2}{2^*}}\cr\cr
	&&\quad \le c||k||_{L^{r}(B_R)}^2\left(\int_{B_t} \eta^2 (1+|Du(x)|)^{\frac{rq}{r-2}}\,dx\right)^{\frac{r-2}{r}}\cr\cr
	&&+\frac{c}{(t-s)^2}\left(\int_{B_t} (1+|Du(x)|)^{\frac{p'(p+q-2)}{2}}\,dx\right)^{\frac{2}{p'}}\cr\cr
	&&\qquad\qquad\cdot\left[1+\left(\int_{B_R}(1+|Du|^2)^{\frac{p}{2}}\,dx\right)^{\frac{2-p}{p}}
\right]\cr\cr
	 &&+\frac{c}{(t-s)^2}||b||_{L^{p'}(B_R)}^2\left[1+\left(\int_{B_R}(1+|Du|)^{p}\,dx\right)^{\frac{2-p}{p}}
\right]\cr\cr
&&+\frac{c}{(t-s)^2}\left(\int_{B_R}(1+|D u|)^p\,dx\right)^{\frac{2}{p}}
\end{eqnarray}
Our next aim is to establish an estimate independent on the a-priori assumption $Du\in L^{\frac{2^*}{2}p}_{\mathrm{loc}}(\Omega).$
Setting 
$$\mathbb{J}_1=\left(\int_{B_t}  (1+|Du(x)|)^{\frac{qr}{r-2}}\,dx\right)^{\frac{r-2}{r}}$$ 
 and
 $$\mathbb{J}_2=\left(\int_{B_t} (1+|Du(x)|)^{\frac{p'(p+q-2)}{2}}\,dx\right)^{\frac{2}{p'}}$$
we can rewrite \eqref{stima4} as follows

\begin{eqnarray}\label{stima4b}
&&\left(\int_{B_s} |Du|^{\frac{2^*}{2}p}\,dx\right)^{\frac{2}{2^*}}\le c||k||_{L^{r}(B_R)}^2\,\,\mathbb{J}_1\cr\cr
&&\quad+\frac{c\,\mathbb{J}_2}{(t-s)^2}\left[1+\left(\int_{B_R}(1+|Du|)^{p}\,dx\right)^{\frac{2-p}{p}}
\right]\cr\cr
	 &&\qquad+\frac{c}{(t-s)^2}||b||_{L^{p'}(B_R)}^2\left[1+\left(\int_{B_R}(1+|Du|)^{p}\,dx\right)^{\frac{2-p}{p}}
\right]\cr\cr
&&\quad\qquad+\frac{c}{(t-s)^2}\left(\int_{B_R}(1+|D u|)^p\,dx\right)^{\frac{2}{p}}.
	 \end{eqnarray} 
 \\
 By the inequalities in \eqref{bound}, there exist $\vartheta_1\in (0,1)$ and $\vartheta_2\in (0,1)$ such that
 $$1=\frac{\vartheta_1\frac{qr}{r-2}}{\frac{2^*p}{2}}+\frac{(1-\vartheta_1)\frac{qr}{r-2}}{p}$$
 and 
 $$1=\frac{\vartheta_2\frac{p}{p-1}\frac{p+q-2}{2}}{\frac{2^*p}{2}}+\frac{(1-\vartheta_2)\frac{p}{p-1}\frac{p+q-2}{2}}{p}$$
 It is easy to check that
 $$\vartheta_1=\frac{2^*-2}{2}\frac{(q-p)r+2p}{qr}$$
 and 
 $$\vartheta_2=\frac{2^*-2}{2}\frac{q-p}{p+q-2}$$
 Therefore, the interpolation  inequality implies
\begin{eqnarray}\label{J1}
 	\mathbb{J}_1 &\le & \left(\int_{B_t}  (1+|Du(x)|)^{\frac{2^*p}{2}}\,dx\right)^{\frac{\vartheta_1\frac{qr}{r-2}}{\frac{2^*p}{2}}\frac{r-2}{r}}\cr\cr
 	&&\qquad \cdot\left(\int_{B_t}  (1+|Du(x)|)^{p}\,dx\right)^{\frac{(1-\vartheta_1)\frac{qr}{r-2}}{p}\frac{r-2}{r}}\cr\cr
 	&= &\left(\int_{B_t}  (1+|Du(x)|)^{\frac{2^*p}{2}}\,dx\right)^{\frac{2^*-2}{2}\frac{(q-p)r+2p}{pr}\frac{2}{2^*}}\cr\cr
 	&&\qquad \cdot\left(\int_{B_t}  (1+|Du(x)|)^{p}\,dx\right)^{(1-\vartheta_1)\frac{q}{p}}
 \end{eqnarray}
and 
 \begin{eqnarray}\label{J2}
 	\mathbb{J}_2 &\le & \left(\int_{B_t}  (1+|Du(x)|)^{\frac{2^*p}{2}}\,dx\right)^{\frac{\vartheta_2\frac{p'(q+p-2)}{2}}{\frac{2^*p}{2}}\frac{2}{p'}}\cr\cr
 	&&\qquad \cdot\left(\int_{B_t}  (1+|Du(x)|)^{p}\,dx\right)^{\frac{(1-\vartheta_2)\frac{p'(q+p-2)}{2}}{p}\frac{2}{p'}}\cr\cr
 	&= &\left(\int_{B_t}  (1+|Du(x)|)^{\frac{2^*p}{2}}\,dx\right)^{\frac{2^*-2}{2}\frac{q-p}{p}\frac{2}{2^*}}\cr\cr
 	&&\qquad \cdot\left(\int_{B_t}  (1+|Du(x)|)^{p}\,dx\right)^{\frac{(1-\vartheta_2)(q+p-2)}{p}},
 \end{eqnarray}
 where we used the  expressions of $\vartheta_1,\vartheta_2$.
 Using   estimates   \eqref{J1} and \eqref{J2}  in \eqref{stima4b} we obtain
\begin{eqnarray}\label{stima5}
	 &&\left(\int_{B_s}|D u|^{\frac{2^*p}{2}}\,dx\right)^{\frac{2}{2^*}}\cr\cr
	  &\le& c||k||_{L^r(B_R)}^2\left(\int_{B_t}  (1+|Du(x)|)^{\frac{2^*p}{2}}\,dx\right)^{\frac{2^*-2}{2}\frac{(q-p)r+2p}{pr}\frac{2}{2^*}}\left(\int_{B_t}  (1+|Du(x)|)^{p}\,dx\right)^{\frac{(1-\vartheta_1)q}{p}}\cr\cr
	 &&+\frac{c}{(t-s)^2}\left(\int_{B_t}  (1+|Du(x)|)^{\frac{2^*p}{2}}\,dx\right)^{\frac{2^*-2}{2}\frac{q-p}{p}\frac{2}{2^*}}\left(\int_{B_t}  (1+|Du(x)|)^{p}\,dx\right)^{\frac{(1-\vartheta_2)(q-p+2)}{p}}\cr\cr
 	&&\qquad \cdot\left[1+\left(\int_{B_R}(1+|Du|)^{p}\,dx\right)^{\frac{2-p}{p}}
\right]\cr\cr
	&&+\frac{c}{(t-s)^2}||b||_{L^{p'}(B_R)}^2\left[1+\left(\int_{B_R}(1+|Du|)^{p}\,dx\right)^{\frac{2-p}{p}}
\right]\cr\cr
&&+\frac{c}{(t-s)^2}\left(\int_{B_R}(1+|D u|)^p\,dx\right)^{\frac{2}{p}}. \end{eqnarray}

	 Let us note that
	 $$\frac{(2^*-2)}{2}\frac{(q-p)r+2p}{rp}<1 \quad\Longleftrightarrow\quad\frac{q}{p}<1+2\left(\frac{1}{n}-\frac{1}{r}\right) $$
	which is precisely \eqref{gap}, and
	$$ \frac{(2^*-2)}{2}\frac{q-p}{p} <1 \quad\Longleftrightarrow\quad \frac{q}{p}<1+\frac{2}{n}$$
	which is less restrictive than \eqref{gap}. Therefore
	by Young's inequality with exponents 
	$$\frac{2rp}{(2^*-2)[r(q-p)+2p]},\qquad \frac{2rp}{2rp-(2^*-2)[r(q-p)+2p]}$$ 
	in the first term of the right hand side of \eqref{stima5} and with exponents  
	$$\frac{2p}{(2^*-2)(q-p)},\qquad \frac{2p}{2p-(2^*-2)(q-p)}$$ 
	in the second term to get
	\begin{eqnarray}\label{stima6}
	 &&\left(\int_{B_s}|D u|^{\frac{2^*p}{2}}\,dx\right)^{\frac{2}{2^*}}\le \frac{1}{2}\left(\int_{B_t}  (1+|Du(x)|)^{\frac{2^*p}{2}}\,dx\right)^{\frac{2}{2^*}}\cr\cr
	 &&\quad+ c||k||_{L^r(B_R)}^{\gamma_1}\left(\int_{B_R}  (1+|Du(x)|)^{p}\,dx\right)^{\gamma_2}\cr\cr
	 &&+\frac{c}{(t-s)^{2\gamma_0}}\left(\int_{B_R}  (1+|Du(x)|)^{p}\,dx\right)^{\gamma_3}\cr\cr
	 &&+\frac{c}{(t-s)^2}||b||_{L^{p'}(B_R)}^2\left[1+\left(\int_{B_R}(1+|Du|)^{p}\,dx\right)^{\frac{2-p}{p}}
\right]\cr\cr
&&+\frac{c}{(t-s)^2}\left(\int_{B_R}(1+|D u|)^p\,dx\right)^{\frac{2}{p}},
	 \end{eqnarray}
	 where $\gamma_i>1$ only depend on $p,q,r,n$, for $i=0,1,2,3$.
The iteration Lemma \ref{iter} implies
\begin{equation}\label{stima6}
	\left(\int_{B_{\frac{R}{2}}}|D u|^{\frac{2^*p}{2}}\,dx\right)^{\frac{2}{2^*}}\le \frac{c(1+||k||_{L^r(B_R)}+ ||b||_{L^{p'}(B_R)})^\gamma}{R^{2\gamma}}\left(1+\int_{B_R}  (1+|Du|)^{p}\,dx\right)^{\gamma}	 \end{equation}
	 for every ball $B_R\Subset\Omega$ for a constant  $ c=c(n,p,q,L,\nu)$ and with a positive exponent $\gamma=\gamma(n,p,q,r)$. Finally the integrals $\mathbb{J}_1$ and $\mathbb{J}_2$ can be estimated as follows
	 \begin{eqnarray}\label{J_1}
	\mathbb{J}_1\le c\left(\int_{B_R}  (1+|Du(x)|)^{p}\,dx\right)^{\gamma_4}	
	 \end{eqnarray} 
 and
 \begin{eqnarray}\label{J_2}
 	\mathbb{J}_2\le c\left(\int_{B_R}  (1+|Du(x)|)^{p}\,dx\right)^{\gamma_5},
 \end{eqnarray}
with  $ c=c(n,R,p,q,L,\nu, ||k||_{L^r(B_R)}, ||b||_{L^{p'}(B_R)})$ and  positive exponents $\gamma_4=\gamma_4(n,p,q,r)$ and $\gamma_5=\gamma_5(n,p,q,r)$. Therefore, inserting these estimates in \eqref{stimadiff0}, we obtain 
\begin{eqnarray}
	 \int_{B_{\frac{R}{2}}}\frac{|\tau_hV_p(D u)|^2}{|h|^2}\,dx
	 \le c(1+||k||_{L^r(B_R)}+ ||b||_{L^{p'}(B_R)})^{\vartheta}\left(1+\int_{B_R}  (1+|Du(x)|)^{p}\,dx\right)^{\vartheta}	 \end{eqnarray}
	 where, again, $ c=c(n,R,p,q,L,\nu)$ and  $\vartheta=\vartheta(n,p,q,r)>0$.
	 Denoting by
	 \begin{eqnarray*}
	 	M^2&=&c(1+||k||_{L^r(B_R)}+ ||b||_{L^{p'}(B_R)})^{\vartheta}\left(\int_{B_R}  (1+|Du(x)|)^{p}\,dx\right)^{\vartheta}	 \end{eqnarray*} we conclude, by the use of Lemma \ref{lep}, that
	 $$V_p(Du)\in W^{1,2}_{\mathrm{loc}}(\Omega)$$
	 and therefore, by Lemma \ref{VD},
	 $$D^2u\in L^p_{\mathrm{loc}}(\Omega).$$
	 Moreover  estimates \eqref{dersec2} and \eqref{dersec1} hold true.
	\end{proof}
	
	As a consequence of previous theorem, we deduce the following
\begin{theorem}\label{main2}
Let $u\in W^{1,\frac{2^*p}{2}}_{\mathrm{loc}}(\Omega)$ be a solution to \eqref{equa} under the assumptions \eqref{(A2)}--\eqref{(A3)} with $1<p<q$ such that
\begin{equation}\label{gap12}
	\frac{q}{p}<1+\min\left\{\frac{1}{n}-\frac{1}{r},\, \frac{2(p-1)}{p(n-2)}\right\}\qquad \text{if}\,\, n>2
\end{equation}
or
\begin{equation}\label{gap1n=2}
	\frac{q}{p}<1+\frac{1}{n}-\frac{1}{r},\qquad\qquad\qquad\qquad\qquad \text{if}\,\, n=2.
\end{equation}
Then    estimates \eqref{dersec2} and \eqref{dersec1} hold true.
\end{theorem}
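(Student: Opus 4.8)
The plan is to obtain Theorem~\ref{main2} as an immediate consequence of the a priori estimate of Theorem~\ref{apriori}, by a relabelling of the second growth exponent. The crucial remark is that, setting
$$\tilde q:=2q-p,$$
the structure conditions \eqref{(A2)}--\eqref{(A3)} for the pair $(p,q)$ are \emph{literally} the conditions \eqref{(A2b)}--\eqref{(A3b)} for the pair $(p,\tilde q)$. Indeed \eqref{(A2)} is exactly \eqref{(A2b)}; moreover $\frac{p+\tilde q-4}{4}=\frac{q-2}{2}$ and $\frac{p+\tilde q-2}{4}=\frac{q-1}{2}$, so the right-hand side of \eqref{(A1)} coincides with that of \eqref{(A1b)} once $q$ is replaced by $\tilde q$ there, and likewise \eqref{(A3)} becomes \eqref{(A3b)}. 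Note also that $1<p<\tilde q$, since $\tilde q-p=2(q-p)>0$, and that $1<p\le2$ is part of the hypotheses \eqref{(A2)}--\eqref{(A3)}, so the pair $(p,\tilde q)$ is admissible for Theorem~\ref{apriori}.

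Next I would check that the gap hypothesis transfers correctly. Since $\frac{\tilde q}{p}=\frac{2q}{p}-1$, the condition \eqref{gap} written for $(p,\tilde q)$, that is $\frac{\tilde q}{p}<1+\min\{2(\frac1n-\frac1r),\,\frac{4(p-1)}{p(n-2)}\}$, is equivalent to
$$\frac{q}{p}<1+\min\left\{\frac1n-\frac1r,\ \frac{2(p-1)}{p(n-2)}\right\},$$
which is precisely the assumption \eqref{gap12}; for $n=2$, \eqref{gapn=2} for $(p,\tilde q)$ is in the same way equivalent to \eqref{gap1n=2}. Thus the gap hypothesis of Theorem~\ref{apriori} holds for $(p,\tilde q)$.

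Finally I would verify that $u$ is an admissible solution for Theorem~\ref{apriori}. The assumption $Du\in L^{\frac{2^*p}{2}}_{\mathrm{loc}}(\Omega)$ is exactly \eqref{ipoapriori}. Moreover the gap \eqref{gap} for $(p,\tilde q)$ gives, as in \eqref{bound} read with $\tilde q$ in place of $q$, the chain $p<\frac{\tilde q r}{r-2}<\frac{2^*p}{2}$; in particular $\tilde q<\frac{2^*p}{2}$, so that $L^{\frac{2^*p}{2}}_{\mathrm{loc}}(\Omega)\hookrightarrow L^{\tilde q}_{\mathrm{loc}}(\Omega)$ and hence $u\in W^{1,\tilde q}_{\mathrm{loc}}(\Omega)$; the integral identity \eqref{locale} defining a local solution of \eqref{equa} is the same in both statements. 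Therefore Theorem~\ref{apriori} applies to $u$ with the pair $(p,\tilde q)$ and yields $u\in W^{2,p}_{\mathrm{loc}}(\Omega)$, $V_p(Du)\in W^{1,2}_{\mathrm{loc}}(\Omega)$ and the estimates \eqref{dersec2}--\eqref{dersec1}, with an exponent $\gamma$ and a constant $c$ that depend on $p,q,n,r$ (and $c$ also on $\nu,L,R$), since $\tilde q$ is itself a function of $p$ and $q$; observe that the right-hand sides of \eqref{dersec2}--\eqref{dersec1} involve only the exponent $p$, so their form is unaffected by the substitution. This is exactly the assertion. There is no genuine obstacle here: the one point requiring care is the algebraic bookkeeping showing that the single substitution $\tilde q=2q-p$ turns \eqref{(A1)}--\eqref{(A3)} into \eqref{(A1b)}--\eqref{(A3b)} and, simultaneously, the gap \eqref{gap} into \eqref{gap12}.
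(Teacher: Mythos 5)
Your proposal is correct and is essentially the paper's own argument: the paper's proof of Theorem~\ref{main2} simply sets $t=2q-p$, observes that \eqref{(A2)}--\eqref{(A3)} become \eqref{(A2b)}--\eqref{(A3b)} with $t$ in place of $q$, notes $\frac{t}{p}=\frac{2q}{p}-1$, and concludes that \eqref{gap12}--\eqref{gap1n=2} imply \eqref{gap}--\eqref{gapn=2}. You carry out the same relabelling $\tilde q=2q-p$, and in addition make explicit two details the paper leaves implicit --- the algebraic identities $\frac{p+\tilde q-4}{4}=\frac{q-2}{2}$, $\frac{p+\tilde q-2}{4}=\frac{q-1}{2}$, and the verification (via $\tilde q<\frac{\tilde q r}{r-2}<\frac{2^*p}{2}$) that $u\in W^{1,\frac{2^*p}{2}}_{\mathrm{loc}}$ indeed lies in $W^{1,\tilde q}_{\mathrm{loc}}$, so the definition of local solution in Theorem~\ref{apriori} is met --- which tighten the argument without changing its route.
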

\begin{proof}
The proof follows as that of Theorem 5.1 in \cite{mar91}. 	Let $t=2q-p$ then \eqref{(A2)}-- \eqref{(A3)} are nothing else but  \eqref{(A2b)}-- \eqref{(A3b)} with $t$ in place of $q$. Moreover 
$$\frac{t}{p}=\frac{2q}{p}-1$$
and so the validity of \eqref{gap12} and \eqref{gap1n=2} implies the validity of \eqref{gap} and \eqref{gapn=2}, respectively.
\end{proof}

\vskip1cm

\section{Proof of Theorem \ref{main1}}
	 Let $\phi\in C_c^\infty(B_1(0))$, $\phi\ge 0$, be such that $\int_{B_1(0)}\phi\,dx=1$ and let $\phi_\varepsilon$ be the corresponding family of mollifiers. Let us set
	 $$\tilde{\mathcal{A}}_\varepsilon(x,\xi)=\int_{B_1(0)}\phi(\omega)\mathcal{A}(x+\varepsilon\omega,\xi)\,d\omega,$$
	 
	 \medskip
	 
	  $$\mathcal{A}_\varepsilon(x,\xi)=\tilde{\mathcal{A}}_\varepsilon(x,\xi)+\varepsilon(1+|\xi|^2)^{\frac{q-2}{2}}\xi$$
	  and
	  $$b_\varepsilon=b\star \phi_\varepsilon.$$
	  Let $u_0\in W^{1,\frac{p(q-1)}{p-1}}(\Omega)$, fix a ball $B_R\Subset \Omega$ and let $v_\varepsilon\in u_0+ W^{1,q}_0(\Omega)$ be the solution to the Dirichlet problem
	  \begin{equation}\label{equaapp}
	  	\begin{cases}\mathrm{div} \mathcal{A}_\varepsilon(x,Dv_\varepsilon)=b_\varepsilon \qquad\qquad \mathrm{in}\,\, \Omega \cr\cr
	  	v_\varepsilon=u_0\quad\qquad\qquad\qquad\qquad \mathrm{on}\,\,\partial \Omega	
	  	\end{cases}
\end{equation}
	  		
%
	   Note that
	   
	   \medskip
	   
	  \begin{equation}\label{B1}
	| \mathcal{A}_\varepsilon(x,\xi)-\mathcal{A}_\varepsilon(x,\eta)|\le |\xi-\eta|\left[L(\mu^2+1+|\xi|^2+|\eta|^2)^{\frac{p-2}{2}}+c_q\varepsilon(1+|\xi|^2+|\eta|^2)^{\frac{q-2}{2}}\right]
\end{equation}

\medskip

\begin{equation}\label{B2}
	\langle \mathcal{A}_\varepsilon(x,\xi)-\mathcal{A}_\varepsilon(x,\eta),\xi-\eta\rangle\ge \nu|\xi-\eta|^2(\mu^2+|\xi|^2+|\eta|^2)^{\frac{p-2}{2}}+\varepsilon c_q|\xi-\eta|^2(\mu^2+|\xi|^2+|\eta|^2)^{\frac{q-2}{2}}
\end{equation}

\medskip

\begin{equation}\label{B3}
	| \mathcal{A}_\varepsilon(x,\xi)-\mathcal{A}_\varepsilon(y,\xi)|\le |x-y|(k_\varepsilon(x)+k_\varepsilon(y))(1+|\xi|^2)^{\frac{q-1}{2}}
\end{equation}
	 where	 $$k_\varepsilon=k\star \phi_\varepsilon.$$
Note that assumption \eqref{(A1')} implies that
\begin{equation}\label{(B1')}
	| \mathcal{A}_\varepsilon(x,\xi)|\le (L+c_q\varepsilon)|\xi|(1+|\xi|^2)^{\frac{q-1}{2}}
\end{equation}	 

	Since the operator $\mathcal{A}_\varepsilon$ satisfies assumptions \eqref{B1}--\eqref{B3} and $\frac{p(q-1)}{p-1}\ge q$, there exists a unique solution $v_\varepsilon\in u_0+W^{1,q}(\Omega)$ and by the regularity results in \cite{Tolksdorf}, we have that $v_\varepsilon\in W^{2,q}_{\mathrm{loc}}(\Omega)$ and $V_q(Dv_\varepsilon)=(\mu^2+|Dv_\varepsilon|^2)^{\frac{q-2}{4}}Dv_\varepsilon\in W^{1,2}_{\mathrm{loc}}(\Omega)$.
	\\
	Therefore, by Lemma \ref{rem},  $Dv_\varepsilon\in L^{\frac{2^*q}{2}}_{\mathrm{loc}}(\Omega)\subset L^{\frac{2^*p}{2}}_{\mathrm{loc}}(\Omega)$ and so
	 we are legitimate to apply estimates \eqref{dersec2} and \eqref{dersec1} to each $v_\varepsilon$ to obtain that
	\begin{eqnarray}\label{dersec1b}
\int_{B_{\frac{r}{2}}}|D^2 v_\varepsilon|^p\,dx\le c(1+||k_\varepsilon||_{L^r(B_R)}+ ||b_\varepsilon||_{L^{p'}(B_R)})^{\gamma}\left(1+\int_{B_r}  (1+|Dv_\varepsilon(x)|)^{p}\,dx\right)^{\gamma}
\end{eqnarray}
and
\begin{eqnarray}\label{dersec2b}
	\int_{B_{\frac{r}{2}}}|D(V_p(D v_\varepsilon))|^2\,dx
	\le  c(1+||k_\varepsilon||_{L^r(B_R)}+ ||b_\varepsilon||_{L^{p'}(B_R)})^{\vartheta}\left(1+\int_{B_r}  (1+|Dv_\varepsilon(x)|)^{p}\,dx\right)^{\vartheta},	
\end{eqnarray}
for all $B_r\Subset B_R$. Moreover by assumption \eqref{B2} and since $v_\varepsilon$ solves problem \eqref{equaapp} we get
\begin{eqnarray}\label{stimalp0}
	&&\nu\int_{\Omega}|Dv_\varepsilon-Du_0|^2(\mu^2+|Dv_\varepsilon|^2+|Du_0|^2)^{\frac{p-2}{2}}\,dx\le \int_{\Omega}\langle \mathcal{A}_\varepsilon(x,Dv_\varepsilon)-\mathcal{A}_\varepsilon(x,Du_0), Dv_\varepsilon-Du_0\rangle\, dx\cr\cr
	&=&\int_{\Omega}\langle \mathcal{A}_\varepsilon(x,Dv_\varepsilon), Dv_\varepsilon-Du_0\rangle\, dx-\int_{\Omega}\langle \mathcal{A}_\varepsilon(x,Du_0), Dv_\varepsilon-Du_0\rangle\, dx\cr\cr
	&=&\int_{\Omega}\langle b_\varepsilon, Dv_\varepsilon-Du_0\rangle\, dx-\int_{\Omega}\langle \mathcal{A}_\varepsilon(x,Du_0), Dv_\varepsilon-Du_0\rangle\, dx\cr\cr
	&\le& \int_{\Omega}| b_\varepsilon|| Dv_\varepsilon-Du_0|\, dx+\int_{\Omega}| \mathcal{A}_\varepsilon(x,Du_0)||Dv_\varepsilon-Du_0|\, dx.\end{eqnarray}
	As long as $1<p<2$, we can use Young's inequality with exponents $\frac{2}{p}$ and $\frac{2}{2-p}$ as follows
	\begin{eqnarray}\label{stimalp1}
	&&\int_{\Omega}|Dv_\varepsilon-Du_0|^p\,dx \cr\cr
	&=&\int_{\Omega}|Dv_\varepsilon-Du_0|^p(\mu^2+|Dv_\varepsilon|^2+|Du_0|^2)^{\frac{p-2}{2}\frac{p}{2}}(\mu^2+|Dv_\varepsilon|^2+|Du_0|^2)^{\frac{2-p}{2}\frac{p}{2}}\,dx\cr\cr
	&\le& \nu\int_{\Omega}|Dv_\varepsilon-Du_0|^2(\mu^2+|Dv_\varepsilon|^2+|Du_0|^2)^{\frac{p-2}{2}}\,dx	\cr\cr
	&&\qquad +c(\nu)\int_{\Omega}(\mu^2+|Dv_\varepsilon|^2+|Du_0|^2)^{\frac{p}{2}}\,dx
	\end{eqnarray}
	Using \eqref{stimalp0} to estimate the first term in the right hand side of \eqref{stimalp1}, we get
	\begin{eqnarray}\label{stimalp2}
	&&\int_{\Omega}|Dv_\varepsilon-Du_0|^p\,dx\cr\cr
	&\le&\frac{1}{2}\int_{\Omega}|Dv_\varepsilon-Du_0|^p\,dx+ c\int_{\Omega}| b_\varepsilon|^{\frac{p}{p-1}}+c\int_{\Omega} |\mathcal{A}_\varepsilon(x,Du_0)|^{\frac{p}{p-1}},
	\end{eqnarray}
	where,  in the last line, we used Young's inequality. Reabsorbing the first integral in the right hand side by the left hand side and using \eqref{(B1')} we have
	\begin{eqnarray*}
	&&\int_{\Omega}|Dv_\varepsilon-Du_0|^p\,dx
	\le   c\int_{\Omega}| b_\varepsilon|^{\frac{p}{p-1}}+c\int_{\Omega} |\mathcal{A}_\varepsilon(x,Du_0)|^{\frac{p}{p-1}}\cr\cr
	&\le& c\int_{\Omega}| b|^{\frac{p}{p-1}}+c\int_{\Omega} |Du_0|^{\frac{p(q-1)}{p-1}}
	\end{eqnarray*}
	and so
	\begin{eqnarray}\label{stimalp3}
	\int_{\Omega}|Dv_\varepsilon|^p\,dx
	\le c\int_{\Omega}| b|^{\frac{p}{p-1}}+c\int_{B_R} |Du_0|^{\frac{p(q-1)}{p-1}}
	\end{eqnarray}
	where we used that $b_\varepsilon$ strongly converges to $b$ in $L^{\frac{p}{p-1}}(\Omega)$. Estimate \eqref{stimalp3} implies that $v_\varepsilon$ is a bounded sequence in $u_0+W^{1,p}_0(\Omega)$ and therefore there exists $v\in u_0+W^{1,p}_0(\Omega)$ such that
	$$v_\varepsilon\rightharpoonup v \qquad \text{weakly in }\,\,u_0+W^{1,p}_0(\Omega).$$ 
	On the other hand, since $b_\varepsilon$ strongly converges to $b$ in $L^{\frac{p}{p-1}}_{\mathrm{loc}}(\Omega)$ and $k_\varepsilon$ strongly converges to $k$ in $L^{r}_{\mathrm{loc}}(\Omega)$, using \eqref{stimalp3} in \eqref{dersec1b} and in \eqref{dersec2b}, we get
	\begin{eqnarray}\label{dersec1c}
\int_{B_{\frac{R}{2}}}|D^2 v_\varepsilon|^p\,dx\le c(1+||k||_{L^r(B_R)}+ ||b||_{L^{p'}(B_R)})^{\gamma}\left(1+\int_{B_R} |Du_0|^{\frac{p(q-1)}{p-1}}\right)^{\gamma}		
\end{eqnarray}
and
\begin{eqnarray}\label{dersec2c}
	\int_{B_{\frac{R}{2}}}|D(V_p(D v_\varepsilon))|^2\,dx\le c(1+||k||_{L^r(B_R)}+ ||b||_{L^{p'}(B_R)})^{\vartheta}\left(1+\int_{B_R} |Du_0|^{\frac{p(q-1)}{p-1}}\right)^{\vartheta},	
\end{eqnarray}
for every balls $ B_R\Subset\Omega$.
Estimate \eqref{dersec1c} implies that $v_\varepsilon$ is a bounded sequence in $W^{2,p}_{\mathrm{loc}}(\Omega)$ and therefore 
	$$v_\varepsilon\rightharpoonup v \qquad \text{weakly in }\,\,W^{2,p}_{\mathrm{loc}}(\Omega)$$
	and so
	 $$v_\varepsilon\to v \qquad \text{strongly in }\,\,W^{1,p}_{\mathrm{loc}}(\Omega).$$
	 Passing to the limit as $\varepsilon\to 0$ in \eqref{dersec1c}, by the lower semicontinuity of the norm, we obtain 
	 \begin{eqnarray}\label{dersec1d}
\int_{B_{\frac{r}{2}}}|D^2 v|^p\,dx\le c(1+||k||_{L^r(B_R)}+ ||b||_{L^{p'}(B_R)})^{\gamma}\left(1+\int_{B_R} |Du_0|^{\frac{p(q-1)}{p-1}}\right)^{\gamma}		
\end{eqnarray}
The continuity of the map $DV_p(\xi)$ implies that the sequence $DV_p(Dv_\varepsilon)$ converges, up to a subsequence, a.e. to $DV_p(Dv)$. Therefore by Fatou's Lemma Passing to the limit as $\varepsilon\to 0$ in \eqref{dersec2c} we also have
\begin{eqnarray}\label{dersec2d}
	\int_{B_{\frac{r}{2}}}|D(V_p(D v))|^2\,dx\le c(1+||k||_{L^r(B_R)}+ ||b||_{L^{p'}(B_R)})^{\vartheta}\left(1+\int_{B_R} |Du_0|^{\frac{p(q-1)}{p-1}}\right)^{\vartheta}.	
\end{eqnarray}
It remains to prove that $v$ satisfies \eqref{equaapp}. To this aim, we observe that
\begin{eqnarray*}
	&&\int_{\Omega}\langle\mathcal{A}(x,Dv),D\varphi\rangle\,dx-\int_{\Omega}\langle b,D\varphi\rangle\,dx\cr\cr
	&=&\int_{\Omega}\langle\mathcal{A}(x,Dv)-\tilde{\mathcal{A}}_\varepsilon(x,Dv),D\varphi\rangle\,dx+\int_{\Omega}\langle \tilde{\mathcal{A}}_\varepsilon(x,Dv)-\tilde{\mathcal{A}}_\varepsilon(x,Dv_\varepsilon),D\varphi\rangle\,dx\cr\cr
	&&+\int_{\Omega}\langle \tilde{\mathcal{A}}_\varepsilon(x,Dv_\varepsilon),D\varphi\rangle\,dx-\int_{B_R}\langle b,D\varphi\rangle\,dx\cr\cr
	&=&\int_{\Omega}\langle\mathcal{A}(x,Dv)-\tilde{\mathcal{A}}_\varepsilon(x,Dv),D\varphi\rangle\,dx+\int_{B_R}\langle \tilde{\mathcal{A}}_\varepsilon(x,Dv)-\tilde{\mathcal{A}}_\varepsilon(x,Dv_\varepsilon),D\varphi\rangle\,dx\cr\cr
	&&+\int_{\Omega}\langle b_\varepsilon-b,D\varphi\rangle\,dx-\varepsilon \int_{\Omega}\langle (1+|Dv_\varepsilon|^2)^{\frac{q-2}{2}}Dv_\varepsilon,D\varphi\rangle\,dx,
\end{eqnarray*}
for every $\varphi\in C^1_c(\Omega)$.
Therefore, we are left to prove that the right hand side of previous equality vanishes as $\varepsilon\to 0$. 
This will come if we show that
$$\lim_{\varepsilon\to 0}I_1^\varepsilon:=\lim_{\varepsilon\to 0}\left|\int_{\Omega}\langle\mathcal{A}(x,Dv)-\tilde{\mathcal{A}}_\varepsilon(x,Dv),D\varphi\rangle\,dx\right|=0$$
$$\lim_{\varepsilon\to 0}I_2^\varepsilon:=\lim_{\varepsilon\to 0}\left|\int_{\Omega}\langle \tilde{\mathcal{A}}_\varepsilon(x,Dv)-\tilde{\mathcal{A}}_\varepsilon(x,Dv_\varepsilon),D\varphi\rangle\,dx\right|=0$$
$$\lim_{\varepsilon\to 0}I_3^\varepsilon:=\lim_{\varepsilon\to 0}\left|\int_\Omega\langle b_\varepsilon-b,D\varphi\rangle\,dx\right|=0$$
and 
$$\lim_{\varepsilon\to 0}\varepsilon \left|\int_{\Omega}\langle (1+|Dv_\varepsilon|^2)^{\frac{q-2}{2}}Dv_\varepsilon,D\varphi\rangle\,dx\right|=0$$
Last two estimate are obvious since $b_\varepsilon\to b$ strongly in $L^{\frac{p}{p-1}}(\Omega)$ and  since
\begin{eqnarray*}
	&&\lim_{\varepsilon\to 0}\varepsilon \left|\int_{\Omega}\langle (1+|Dv_\varepsilon|^2)^{\frac{q-2}{2}}Dv_\varepsilon,D\varphi\rangle\,dx\right|\cr\cr
	&\le&\lim_{\varepsilon\to 0}\varepsilon ||D\varphi||_{L^\infty(\Omega')} \int_{\mathrm{supp}\varphi}(1+|Dv_\varepsilon|^2)^{\frac{q-1}{2}}\,dx=0
\end{eqnarray*}
by virtue of the bound $q<\frac{np}{n-2}$ and of  estimate \eqref{dersec2c} and where we denoted by $\Omega'$ a compact set that contains the support of the test function $\varphi$. For what concerns $I_1$ we have by the definition of $\tilde{\mathcal{A}}_\varepsilon$ that
\begin{eqnarray*}
\lim_{\varepsilon\to 0}I_1^\varepsilon&\le& \lim_{\varepsilon\to 0}||D\varphi||_{L^\infty(\Omega')}\int_{\Omega'}|\mathcal{A}(x,Dv)-\tilde{\mathcal{A}}_\varepsilon(x,Dv)|\,dx\cr\cr	
&\le&\lim_{\varepsilon\to 0}||D\varphi||_{L^\infty(\Omega')}\int_{\Omega'}\left|\mathcal{A}(x,Dv)-\int_{B_1(0)}\phi(\omega)\mathcal{A}(x+\varepsilon\omega,Dv)\,d\omega\right|\,dx
\cr\cr
&\le&\lim_{\varepsilon\to 0}||D\varphi||_{L^\infty(\Omega'))}\int_{\Omega'}\int_{B_1(0)}\phi(\omega)\Big|\mathcal{A}(x,Dv)-\mathcal{A}(x+\varepsilon\omega,Dv)\Big|\,d\omega\,dx\cr\cr
&\le& \lim_{\varepsilon\to 0}\varepsilon||D\varphi||_{L^\infty(\Omega')}\int_{\Omega'}\int_{B_1(0)}\phi(\omega)k(x)+k(x+\varepsilon\omega)(1+|Dv|^2)^{\frac{q-1}{2}}\Big|\,d\omega\,dx\cr\cr
&\le& \lim_{\varepsilon\to 0}\varepsilon||D\varphi||_{L^\infty(\Omega')}\int_{\Omega'}(k(x)+k_\varepsilon(x))(1+|Dv|^2)^{\frac{q-1}{2}}\,dx\cr\cr
&\le& \lim_{\varepsilon\to 0}\varepsilon||D\varphi||_{L^\infty(\Omega')}\left(\int_{\Omega'}(k(x)+k_\varepsilon(x))^r\right)^{\frac{1}{r}}\left(\int_{\Omega'}(1+|Dv|^2)^{\frac{q-1}{2}\frac{r}{r-1}}\,dx\right)^{\frac{r-1}{r}}=0
\end{eqnarray*}
since $k_\varepsilon\to k$ strongly in $L^{r}(\Omega)$, by the bound $$\frac{q}{2}\frac{r}{r-1}<\frac{qr}{r-2}<\frac{2^*p}{2}$$
and again by \eqref{dersec2b}. For the estimate of $I_2^\varepsilon$, we observe that
\begin{eqnarray*}
	\lim_{\varepsilon\to 0}I_2^\varepsilon&\le&\lim_{\varepsilon\to 0}||D\varphi||_{L^\infty(\Omega')}\int_{\Omega'}|\mathcal{A}_\varepsilon(x,Dv)-\mathcal{A}_\varepsilon(x,Dv_\varepsilon)|\,dx\cr\cr
	&\le& \lim_{\varepsilon\to 0}||D\varphi||_{L^\infty(\Omega')}\int_{\Omega'}\int_{B_1(0)}\phi(\omega)\Big|\mathcal{A}(x+\varepsilon\omega,Dv)-\mathcal{A}(x+\varepsilon\omega,Dv_\varepsilon)\Big|\,d\omega\,dx\cr\cr
	&\le& L\lim_{\varepsilon\to 0}||D\varphi||_{L^\infty(\Omega')}\int_{\Omega'}|Dv-Dv_\varepsilon|(\mu^2+|Dv|^2+|Dv_\varepsilon|^2)^{\frac{p-2}{2}}\,dx\cr\cr
	&&+ c\varepsilon\lim_{\varepsilon\to 0}||D\varphi||_{L^\infty(\Omega')}\int_{\Omega'}|Dv-Dv_\varepsilon|(1+|Dv|^2+|Dv_\varepsilon|^2)^{\frac{q-2}{2}}\,dx\cr\cr
	&\le& L\lim_{\varepsilon\to 0}||D\varphi||_{L^\infty(\Omega')}\left(\int_{\Omega'}|Dv-Dv_\varepsilon|^p\,dx\right)^{\frac{1}{p}}\cr\cr
	&&\qquad\cdot\left(\int_{\Omega'}(1+|Dv|^2+|Dv_\varepsilon|^2)^{\frac{q-2}{2}\frac{p}{p-1}}\,dx\right)^{\frac{p-1}{p}}=0,
	\end{eqnarray*}
	since $Dv_\varepsilon\to Dv$ strongly in $L^{p}(\mathrm{supp}\varphi)$ and by virtue of the bound $$\frac{q-2}{2}\frac{p}{p-1}< \frac{q-1}{2}\frac{p}{p-1}<\frac{np}{n-2}$$
	and again by \eqref{dersec2b}.
	With these estimates at our disposal, we conclude that for every $\varphi\in C^1_c(\Omega)$
	$$\int_{\Omega}\langle\mathcal{A}(x,Dv),D\varphi\rangle\,dx=\int_{\Omega}\langle b,D\varphi\rangle\,dx,$$
	then $v\in (u_0+W^{1,p}_0(\Omega))\cap W^{2,p}_{\mathrm{loc}}(\Omega)$ is a  solution to \eqref{equaapp}.

\vskip 3cm

\vskip2cm

\end{document}